\documentclass{article}
\usepackage{graphicx} 

\usepackage[url=false]{biblatex}
\newcommand{\TZG}{Torba and Zúñiga-Galindo }

\usepackage[letterpaper,top=2cm,bottom=2cm,left=3cm,right=3cm,marginparwidth=1.75cm]{geometry}

\usepackage{amsfonts, amsmath, amssymb, amsthm}
\usepackage{dsfont}
\usepackage{hyperref}

\newcommand{\R}{\mathds{R}}
\newcommand{\Q}{\mathds{Q}}
\newcommand{\Z}{\mathds{Z}}
\newcommand{\Qp}{\Q_p}
\newcommand{\Zp}{\Z_p}

\newcommand{\Adele}{\mathds{A}_\Q}

\newcommand{\floor}[1]{\lfloor #1 \rfloor}
\newcommand{\ceil}[1]{\lceil #1 \rceil}

\newcommand{\Pb}{\mathds{P}}
\newcommand{\Pbp}{\Pb_p}
\newcommand{\Pbpm}{\Pbp^{(m)}}

\newcommand{\Fourier}{\mathcal{F}}
\newcommand{\invFourier}{\Fourier^{-1}}

\newcommand{\pFourier}{\Fourier_p}
\newcommand{\pinvFourier}{\invFourier_p}

\newtheorem{thm}{Theorem}
\newtheorem{prop}{Proposition}
\newtheorem{lemma}{Lemma}
\newtheorem{defn}{Definition}

\newcommand{\Primes}{\mathcal{P}}

\newcommand{\PbAdele}{\Pb_{\sigma, \Adele}}
\newcommand{\PbmAdele}{\Pb_{\sigma, \Adele}^{(m)}}

\newcommand{\PbmAdeleM}{\Pb^{(m)}_{\sigma|_{p < M}, \Adele}}

\newcommand{\Sm}{\mathbf{S}^{(m)}}

\renewcommand{\S}{\mathcal{S}}

\title{A Scaling Limit of Random Walks in the Rational Adeles}
\author{Rahul Rajkumar}
\date{}

\begin{document}

\maketitle

\begin{abstract}
    This paper shows the convergence of adele-valued random walks to an adelic L\'evy process under scaling limits.
    We use random walks on the $p$-adic numbers to construct random walks initially on the infinite product space, and use survival time analysis to prove that the random walks are almost surely adelic for all time.
    The adelic random walks are shown to be small perturbations of processes that are supported on a finite product of path spaces.
    Weak convergence to an adelic L\'evy process is established in the $J_1$ Skorokhod topology.
\end{abstract}

\section{Introduction}

One of the major results of 20th century probability theory is the establishment of the Wiener process as a scaling limit of random walks.
The general study of scaling limits of random walks is a core part of contemporary probability theory, replacing continuous-time continuous-space processes with far more tractable discrete-time discrete-space approximations in a principled manner.
Although the topological disconnectedness of the $p$-adic and adelic state spaces trivializes the space of continuous paths, Skorokhod space and Fourier analytic methods allow for the study of L\'evy processes associated to $p$-adic and adelic (fractional) heat equations.
In this paper, we consider a natural adelic heat equation and its associated L\'evy process.
We obtain an adele-valued random walk process from random walks on each $p$-adic component, and show that these adelic random walks converge weakly to the desired adelic L\'evy process.


The initial motivation for $p$-adic mathematical physics was Volovich's proposal in 1987 that spacetime at the Planck scale may be nonarchimedean \cite{volovich_p-adic_1987}.
Researchers have since studied $p$-adic models as idealizations of a variety of systems displaying hierarchical and ultrametric structure \cite{khrennikov_ultrametric_2018,dragovich_p-adic_2017}, including in the spreading of disease in the presence of social barriers \cite{volov_toward_2020,khrennikov_ultrametric_2020,khrennikov_ultrametric_2020-1}, neural networks and data science \cite{zuniga-galindo_hierarchical_2024,bradley_local_2025},  geophysics \cite{kochubei_linear_2017, khrennikov_application_2016}, and spin glasses \cite{panchenko_parisi_2013,avetisov_p-adic_2002}.
Since it seems unlikely that Nature would distinguish any prime over all the others, adelic models furthermore aim to put all primes on equal footing \cite{varadarajan_remarks_2002, manin_reflections_1989}.
In addition, $p$-adic analogues of classical processes are an active area of research in number theory \cite{van_peski_local_2024,shen_eigenvalues_2026}, while $p$-adic constructions have recently contributed to studying long-range Bernoulli percolation by way of hierarchical lattices \cite{abdesselam_towards_2018,hutchcroft_critical_2025,hutchcroft_critical_2025-1,hutchcroft_critical_2025-2}.

An analogue of the fractional Laplace operator known as the Vladimirov operator, introduced in \cite{vladimirov_generalized_1988}, can be used to define a heat equation over the $p$-adic numbers or a general nonarchimedean local field.
In multiple dimensions and with a specified norm, the Vladimirov operator may be replaced with a Vladimirov-Taibleson operator \cite{taibleson_fourier_1975}.
The fundamental solution determines a L\'evy process \cite{varadarajan_path_1997}, which can also be obtained as scaling limits of random walks \cite{yasuda_limit_2017,weisbart_p-adic_2024,pierce_brownian_2024}.

Weisbart defined an Vladimirov-type operator on the (finite, rational) adeles, which acts as a $p$-adic Vladimirov operator in each component, and showed that the resulting adelic heat equation gives rise to an adelic process associated with the heat equation defined by this operator \cite{weisbart_infinitesimal_2021}.
Urban then extended the results to the setting of an adele ring over an algebraic number field \cite{urban_diffusion_2022}.
Separately, Yasuda investigated adelic Markov processes \cite{yasuda_markov_2010}, using the approach of Albeverio and Karwowski \cite{albeverio_random_1994} to define processes on balls of shrinking radii and related exit times of the process with the Riemann zeta function.
\TZG studied a different adelic process, with dependent components, by constructing an absolute value on the adeles and defining an associated Vladimirov-Taibleson operator \cite{torba_parabolic_2013}.
We use a metric derived from this absolute value in order to utilize standard characterizations of tightness in the Skorokhod topology.

This paper first summarizes some existing work on $p$-adic and adelic processes, including the convergence of certain $p$-adic random walks under a scaling limit.
Some technical details and notation for the $J_1$ topology and weak convergence are presented and specialized to this setting.
A sequence of almost-surely adelic random walks is constructed, whose finite dimensional distributions converge to those of the adelic process.
The general case with a summable sequence of diffusion coefficients is shown to be a small perturbation of random walks with finitely many nonzero diffusion coefficients, and converges weakly to the adelic process.

\section{Preliminaries}

\subsection{$p$-Adic and Adelic Brownian Motion}

A standard introduction to $p$-adic analysis is Gouvêa \cite{gouvea_p-adic_2003}.
For each prime $p$, denote by $\Qp$ the topological field of $p$-adic numbers, equipped with the $p$-adic absolute value $| \cdot |_p$.
Under the metric induced by the $p$-adic absolute value, $\Qp$ is a locally compact, totally disconnected, complete metric space.
The rationals are everywhere dense in $\Qp$, while the integers are a dense subset of the unit ball, which is the topological ring of $p$-adic integers $\Zp$.
Since the underlying additive group $(\Qp, +)$ is a locally compact abelian group, there is a unique Haar measure $\mu_p$ with the normalization $\mu_p(\Zp) = 1$.

Since $\Qp$ is Pontryagin self-dual, identify the Pontryagin dual $\widehat{\Qp}$ with $\Qp$ to define the Fourier and inverse Fourier transforms as integral operators on suitable complex-valued functions $f$ on $\Qp$ against the Haar measure $\mu_p$.
Denote the Fourier and inverse Fourier transforms of $f$ by
\[ \pFourier f  \quad \text{and} \quad \pinvFourier f, \]
again complex-valued functions on $\Qp$.
For any positive real number $b > 0$, let $M_{p,b}$ denote the multiplication operator defined by
\[ (M_{p,b} f) (\cdot) = | \cdot |_p^b f(\cdot). \]
\begin{defn}
    The Vladimirov operator with exponent $b$ is the pseudo-differential operator
    \[ \Delta_{p,b} = \pinvFourier M_{p,b} \pFourier, \]
    a $p$-adic analogue of the fractional Laplace operator.
\end{defn}

\begin{defn}
    Let $\sigma_p$ be a positive real number.
    The $p$-adic heat equation with diffusion coefficient $\sigma_p$ and exponent $b$ is the pseudo-differential equation
    \[ \frac{d}{dt}u = - \sigma_p \Delta_{p,b} u, \]
    where $u(t,x)$ is a function of real time and $p$-adic space.
\end{defn}

\begin{thm}[Varadarajan \cite{varadarajan_path_1997}]
    The fundamental solution to the $p$-adic heat equation with diffusion coefficient $\sigma_p$ and diffusion exponent $b$ is the probability density function $f(t,x; p)$ given by
    \[ f(t,x; p) = \left(\pinvFourier \exp\left(- \sigma_p t | \cdot |_p^b \right) \right)(x). \]
    The family
    \[ \big(f(t, \cdot; p)\big)_{t \in [0,\infty)} \]
    is a convolution semigroup of probability densities on $\Qp$.
\end{thm}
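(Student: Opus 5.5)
We compute the inverse Fourier transform of the radial function $g_t(\xi) = \exp(-\sigma_p t |\xi|_p^b)$ explicitly by decomposing $\Qp$ into spheres, and read off positivity, normalization and the semigroup law from the resulting formula. Throughout we use the additive character $\chi_p(x) = e^{2\pi i \{x\}_p}$ and the standard identity
\[ \int_{p^{-k}\Zp} \chi_p(\xi x)\, d\mu_p(\xi) = p^{k}\, \mathds{1}_{\{|x|_p \le p^{-k}\}}. \]

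\emph{Step 1: well-definedness.} For $t>0$, $g_t$ is in $L^1(\Qp)$ because $\mu_p(\{|\xi|_p = p^k\}) = (1-p^{-1})p^k$ and $\sum_k p^k e^{-\sigma_p t p^{kb}} < \infty$. Hence $f(t,\cdot;p) = \pinvFourier g_t$ is a bounded continuous function. The case $t=0$ is interpreted as the point mass $\delta_0$, in the usual convention for semigroups.

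\emph{Step 2: explicit formula.} Write $g_t = \sum_{k\in\Z} e^{-\sigma_p t p^{kb}} \big(\mathds{1}_{p^{-k}\Zp} - \mathds{1}_{p^{-k+1}\Zp}\big)$, which is an Abel-type rearrangement into differences of ball indicators. Summing by parts gives $g_t = \sum_k \big(e^{-\sigma_p t p^{kb}} - e^{-\sigma_p t p^{(k+1)b}}\big)\mathds{1}_{p^{-k}\Zp}$. The coefficients are nonnegative, since $k \mapsto e^{-\sigma_p t p^{kb}}$ is decreasing, and the boundary terms vanish as $k\to\pm\infty$ because $g_t \to 1$ near $0$ and $g_t \to 0$ at infinity. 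Applying the identity above termwise, which is justified by monotone convergence once absolute convergence is checked for $x \neq 0$, yields
\[ f(t,x;p) = \sum_{k \,:\, p^{-k} \ge |x|_p} \big(e^{-\sigma_p t p^{kb}} - e^{-\sigma_p t p^{(k+1)b}}\big)\, p^{k}. \]
So $f(t,\cdot;p)$ is a nonnegative combination of normalized ball indicators $p^k\mathds{1}_{p^{-k}\Zp}$, each of which has integral one.

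\emph{Step 3: probability density and semigroup.} By Step 2, $f \ge 0$, and by Tonelli its integral is the telescoping sum $\sum_k \big(e^{-\sigma_p t p^{kb}} - e^{-\sigma_p t p^{(k+1)b}}\big) = \lim_{k\to-\infty} g_t - \lim_{k\to\infty} g_t = 1 - 0 = 1$. Equivalently, $\int f = g_t(0) = 1$. For the semigroup property, $f(t,\cdot;p)$ and $f(s,\cdot;p)$ are in $L^1$ with Fourier transforms $g_t, g_s$. The convolution theorem on the locally compact abelian group $\Qp$ gives $\pFourier(f(t)*f(s)) = g_t g_s = g_{t+s}$, and Fourier uniqueness on $L^1$ then gives $f(t)*f(s) = f(t+s)$. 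Weak continuity $f(t)\to\delta_0$ as $t\downarrow 0$ follows from pointwise convergence $g_t \to 1$ of the characteristic functions, by Lévy continuity on $\Qp$. Finally, $f$ solves the heat equation, since $\partial_t g_t = -\sigma_p |\xi|_p^b g_t$ and differentiation under the integral is dominated by the $L^1$ bound of Step 1.

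The main obstacle is Step 2: justifying the interchange of the infinite sum with the inverse Fourier transform. Near $k \to -\infty$ the coefficients are of size $\sigma_p t p^{kb}(p^b-1)$ and are multiplied by $p^k$, so that part of the sum is fine. As $k \to +\infty$ the coefficients decay super-exponentially, but one must check that the series defining $g_t$ converges in $L^1$, not just pointwise. I would verify this directly from the sphere-measure computation in Step 1 before applying the Fourier identity termwise.
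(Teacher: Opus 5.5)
The paper gives no proof of this statement; it is quoted from Varadarajan, so there is no in-paper argument to compare against. Your proof is correct and self-contained. Set $a_k = e^{-\sigma_p t p^{kb}}$. Writing $g_t$ as the nonnegative combination $\sum_k (a_k - a_{k+1})\mathds{1}_{p^{-k}\Zp}$ and transforming the ball indicators termwise is the standard computation behind this result. It gives positivity, unit mass and, via the convolution theorem, the semigroup law, exactly as you describe.

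Two small corrections:
\begin{itemize}
\item In $x$-space the normalized indicators are $p^k\mathds{1}_{p^{k}\Zp}$, i.e.\ $p^k$ times the indicator of the ball $\{|x|_p \le p^{-k}\}$, which has measure $p^{-k}$. They are not $p^k\mathds{1}_{p^{-k}\Zp}$, although your displayed formula for $f$ is right.
\item The step you flag as the main obstacle is immediate. The partial sums of the summed-by-parts series are nonnegative, increase pointwise to $g_t$, and are bounded by $g_t \in L^1$. Monotone convergence therefore gives $\sum_k (a_k - a_{k+1})p^k = \|g_t\|_1 < \infty$. Dominated convergence, with dominating function $g_t$, then justifies taking the inverse transform termwise.
\end{itemize}
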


Let $\Qp^{[0,\infty)}$ be the space of all $\Qp$-valued functions on $[0,\infty)$.
Denote by $D(\Qp)$ the subset of $\Qp$-valued paths which are right continuous with left limits, the Skorokhod space of cadlag $\Qp$-valued paths, equipped with the standard $J_1$ Skorokhod topology.
Let $Y^{(p)}$ be the Kolmogorov coordinate function on $[0,\infty) \times \Qp^{[0,\infty)}$, given by
\[ Y^{(p)}(t,\omega) = Y^{(p)}_t(\omega) = \omega(t). \]
Varadarajan uses the Kolmogorov existence theorem to obtain a probability measure on the space $\Qp^{[0,\infty)}$ such that $Y^{(p)}_t$ is a L\'evy process with $Y^{(p)}_0 = 0$ and with increments $Y^{(p)}_{t + u} - Y^{(p)}_u$ having density $f(t,\cdot;p)$ for any $t > 0$ and $u \geq 0$.
By an application of the Chentsov criterion, there is then a version of the stochastic process whose sample paths lie in $D(\Qp)$.
Call such a process a $p$-adic L\'evy process with diffusion coefficient $\sigma_p$ and diffusion exponent $b$.
\begin{thm}[Varadarajan \cite{varadarajan_path_1997}]
    There is a probability measure $\Pbp$ on $D(\Qp)$ such that the stochastic process
    \[ (D(\Qp), \Pbp, Y^{(p)}) \]
    is a $p$-adic L\'evy process with diffusion coefficient $\sigma_p$ and diffusion exponent $b$.
\end{thm}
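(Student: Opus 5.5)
The plan has four steps: build the finite-dimensional laws from the convolution semigroup, extend them to a measure on $\Qp^{[0,\infty)}$ by Kolmogorov, obtain a cadlag version by a Chentsov-type moment bound, and push the resulting law forward to $D(\Qp)$.

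\textbf{Finite-dimensional laws and Kolmogorov.} For times $0 = t_0 < t_1 < \dots < t_n$, I define the joint law of $(Y_{t_1},\dots,Y_{t_n})$ as the law of the partial sums of independent increments. The $k$-th increment has density $f(t_k - t_{k-1},\cdot;p)$ with respect to $\mu_p$. By the preceding theorem these densities form a convolution semigroup, $f(s,\cdot;p)*f(t,\cdot;p) = f(s+t,\cdot;p)$. This makes the family consistent under marginalization: integrating out an intermediate time merges two adjacent increments into one. Since $\Qp$ is a Polish space, the Kolmogorov extension theorem then gives a probability measure on $\Qp^{[0,\infty)}$. Under it the coordinate process starts at $0$ and has stationary independent increments with the prescribed densities.

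\textbf{Increment estimate.} To get a cadlag version I need a Chentsov-type bound for the $p$-adic metric. By stationarity the increment over $[u,u+h]$ has density $f(h,\cdot;p)$, so it suffices to estimate $P(|Y_h|_p > p^{-k})$ for small $h$. This probability equals $1 - \int_{p^k\Zp} f(h,x;p)\,d\mu_p(x)$. By Parseval, integrating against the indicator of $p^k\Zp$, whose Fourier transform is $p^{-k}$ times the indicator of $p^{-k}\Zp$, this becomes
\[ 1 - \int_{p^{-k}\Zp} e^{-\sigma_p h |\xi|_p^b}\, d\mu_p(\xi)\, p^{-k} \le \sigma_p h\, p^{kb}. \]
Hence $P(|Y_h|_p > r) \le C(r)\,h$ for each $r>0$, uniformly in the starting time. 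With independent increments this gives
\[ P\bigl(|Y_{t}-Y_{s}|_p > r,\ |Y_{u}-Y_{t}|_p > r\bigr) \le C(r)^2 (u-s)^2 \]
for $s<t<u$, which is the Chentsov condition with exponent $2>1$. Even simpler, $E\bigl[\min(1,|Y_t-Y_s|_p)\min(1,|Y_u-Y_t|_p)\bigr]$ can be bounded by $C(u-s)^2$ by summing the tail bound over dyadic shells $p^{-k}$. Doing so requires $h p^{kb}$ to be summed against the weights $p^{-k}$, and this needs care since $b$ may exceed $1$.

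\textbf{Main obstacle and fallback.} The moment bound in the previous step is the part I expect to be hardest. If it is awkward, I would instead use the ultrametric structure directly. For each ball $p^k\Zp$, the process modulo $p^k\Zp$ is a continuous-time Markov chain on the countable group $\Qp/p^k\Zp$ with bounded jump rate $\le \sigma_p p^{kb}$. That bound comes from the same Fourier computation. A pure-jump chain with bounded rates has a version with finitely many jumps on compact intervals, hence with cadlag paths. These quotient versions are compatible in $k$ and $\Qp = \varprojlim \Qp/p^k\Zp$ on bounded sets, so they assemble into a cadlag $\Qp$-valued version. Boundedness of the paths on compacts follows because the chain on $\Qp/\Zp$ makes finitely many jumps on $[0,T]$.

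\textbf{Transfer to $D(\Qp)$.} With a cadlag version $\tilde Y$ in hand, the map $\omega \mapsto \tilde Y_\cdot(\omega)$ sends into $D(\Qp)$. It is measurable for the Borel $\sigma$-algebra of the $J_1$ topology, because that $\sigma$-algebra is generated by the coordinate projections, $\Qp$ being separable and complete. I take $\Pbp$ to be the pushforward. The coordinate process on $(D(\Qp),\Pbp)$ has the same finite-dimensional distributions, so it is the $p$-adic L\'evy process with diffusion coefficient $\sigma_p$ and diffusion exponent $b$.
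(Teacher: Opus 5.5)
Your proposal is correct and follows the route the paper attributes to Varadarajan: Kolmogorov extension from the convolution semigroup to a measure on $\Qp^{[0,\infty)}$, then the Chentsov criterion to get a version with paths in $D(\Qp)$. Your Fourier tail bound $\Pr(|Y_h|_p > p^{-k}) \le \sigma_p h p^{kb}$, squared via independent increments, is exactly the two-increment estimate the criterion requires. That probability form already finishes the argument, so you need neither the fallback nor the $\min(1,|\cdot|_p)$ moment variant; the latter would only give exponent $2/b \le 1$ when $b \ge 2$, and a moment version should instead use $|\cdot|_p^r$ with $r \in (b/2,b)$, as in Proposition~\ref{prop:moment-estimate}.
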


Let $\Primes$ denote the collection of all primes.
The (finite, rational) adeles are the subring $\Adele$ of the product space $\prod_{p \in \Primes} \Qp$ given by
\[ \Adele = \left\{ x = (x_p)_{p \in \Primes}\in \prod_{p \in \Primes} \Qp : x_p \in \Zp \text{ for almost every $p$} \right\}. \]
Let $\sigma = (\sigma_p)_{p \in \Primes}$ be a summable sequence of nonnegative diffusion coefficients, and let $b > 0$ be fixed.
For suitable functions $f$, define the multiplication operator $M_\sigma$ by
\[ M_\sigma f(x) = \left( \sum_{p \in \Primes} \sigma_p |x_p|_p^b \right) f(x). \]
A Fourier and inverse Fourier transform on $\Adele$ are defined for suitable functions $f = (f_p)_{p \in \Primes}$ by
\[ (\Fourier_{\Adele} f)(x) = \prod_{p \in \Primes} (\pFourier f_p)(x_p) \]
and
\[ (\invFourier_{\Adele} f)(x) = \prod_{p \in \Primes} (\pinvFourier f_p)(x_p). \]
The operator
\[ \Delta_{\sigma, \Adele} = \invFourier_{\Adele} M_\sigma \Fourier_{\Adele} \]
defines an adelic heat equation
\[ \frac{d}{dt} u = -\Delta_{\sigma, \Adele} u. \]
Note that the pseudo-Laplace operator $\Delta_{\sigma, \Adele}$ encodes the diffusion coefficients directly, unlike in the $p$-adic case.

Let $\Pb_{\sigma, \Adele}$ denote the probability measure given by
\[ \Pb_{\sigma, \Adele} = \prod_{p \in \Primes} \Pbp, \]
where $\Pbp$ is the probability measure on $D(\Qp)$ associated with a $p$-adic Brownian motion with diffusion coefficient $\sigma_p$ and diffusion exponent $b$.
The finite dimensional distributions of $\Pb_{\sigma, \Adele}$ are given by integration against the fundamental solution of the adelic heat equation.
Let $D(\Adele)$ denote the subset of $\Adele$-valued paths in the product Skorokhod space \[ D(\Adele) \subset D\left(\prod_{p \in \Primes} \Qp\right) = \prod_{p \in \Primes} D(\Qp). \]

\begin{thm}[Weisbart \cite{weisbart_infinitesimal_2021}]
    The probability measure $\Pb_{\sigma, \Adele}$ gives full measure to $D(\Adele)$ in the product Skorokhod space $D\left(\prod_{p \in \Primes} \Qp\right)$.
\end{thm}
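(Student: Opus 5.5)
The plan is a Borel--Cantelli argument over the primes, using the summability of $\sigma$. A path $\omega = (\omega_p)_{p \in \Primes}$ in the product Skorokhod space lies in $D(\Adele)$ exactly when, for every finite horizon $T$, all but finitely many components $\omega_p$ stay in $\Zp$ on $[0,T]$. On a finite horizon the exceptional set can then be taken independent of $t$. Cadlag-ness in the adelic topology then follows automatically: on $[0,T]$ only finitely many coordinates leave $\Zp$, and the remaining coordinates are cadlag in the compact product $\prod \Zp$. It therefore suffices to fix an integer $T$, show that almost surely only finitely many $p$ have $\sup_{t \le T} |Y^{(p)}_t|_p > 1$, and take a countable intersection over $T \in \mathbb{N}$.

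For each $p$, let $\tau_p$ be the exit time of $Y^{(p)}$ from $\Zp$. The key estimate is
\[ \Pbp(\tau_p \le T) \le C\, \sigma_p T \]
for a constant $C$ depending on $b$ but not on $p$.

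To prove it, I would use that the $p$-adic L\'evy process is a pure-jump process. Its generator $-\sigma_p\Delta_{p,b}$ has L\'evy measure
\[ \sigma_p\, c_p\, |y|_p^{-1-b}\, d\mu_p(y), \qquad c_p = \frac{p^b - 1}{1 - p^{-1-b}}, \]
which is computed from the Fourier transform of $|\cdot|_p^b$ (Vladimirov's formula). By ultrametricity, starting from $0 \in \Zp$ the process leaves $\Zp$ exactly at its first jump of size $|y|_p > 1$. Such jumps arrive as a Poisson process with rate
\[ \lambda_p = \sigma_p c_p \int_{|y|_p > 1} |y|_p^{-1-b} \, d\mu_p(y) = \sigma_p c_p \sum_{k \ge 1} p^{-k(1+b)} p^{k}(1 - p^{-1}). \]
This sum equals $\sigma_p c_p (1-p^{-1})\, p^{-b}/(1-p^{-b})$, which is bounded by a uniform constant times $\sigma_p$. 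Hence
\[ \Pbp(\tau_p \le T) = 1 - e^{-\lambda_p T} \le \lambda_p T \le C \sigma_p T. \]

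If I want to avoid the L\'evy measure, an alternative is to write $Y^{(p)}$ as a continuous-time Markov chain on the balls. Under $f(t,\cdot;p)$ it is constant on cosets of $p^{N}\Zp$ at a coarse scale, and the holding rate at the unit ball can be read off from the eigenvalue structure: $\exp(-\sigma_p t|\xi|_p^b)$ equals $1$ on $\Zp$ and is smaller outside. This gives $\Pbp(Y^{(p)}_t \notin \Zp) \le 1 - e^{-\sigma_p t p^{b}}$-type bounds. The uniform-in-$p$ constant in this step is the main obstacle. A bound like $\sigma_p p^b$ would not be summable under mere summability of $\sigma$, so the argument must genuinely use the exit rate from the unit ball, which the computation above shows to be $O(\sigma_p)$, rather than a crude bound on the density.

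Given the estimate, independence of the components under $\Pb_{\sigma,\Adele}=\prod_p \Pbp$ is not even needed. Since
\[ \sum_p \Pbp(\tau_p \le T) \le C T \sum_p \sigma_p < \infty, \]
the first Borel--Cantelli lemma applied to the events $\{\tau_p \le T\}$ shows that almost surely only finitely many components leave $\Zp$ before $T$. Intersecting over $T \in \mathbb{N}$ gives full measure to $D(\Adele)$. Primes with $\sigma_p = 0$ are trivial, since $Y^{(p)} \equiv 0$ there.
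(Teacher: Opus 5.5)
Your argument is correct, and your exit rate is exactly the one implicit in the paper. The rate $\lambda_p$ simplifies to $\sigma_p\frac{1-p^{-1}}{1-p^{-1-b}}=\frac{p^b(p-1)}{p^{b+1}-1}\sigma_p\le\sigma_p$, which is the paper's constant $D$, so you can take $C=1$.

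The paper does not prove this statement itself; it quotes it from Weisbart. The nearest in-paper argument is the theorem that each $\mathbf{S}^{(m)}$ is almost surely adelic, together with Lemma~\ref{lemma:small-tail-probability}. At $\lambda=1$ (so $[\lambda]_p=p^{-1}$), the $m\to\infty$ limit in that lemma is precisely your survival probability $e^{-\lambda_p T}$.

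Both arguments have the same skeleton: each coordinate survives in $\Zp$ up to time $T$, with an exit rate that is summable because $\sigma$ is. They differ in two respects:
\begin{enumerate}
\item The paper works with the walks. There survival is computed exactly and elementarily, as $(1-p^{-mb})^{\floor{Dp^{mb}T}}$, from iid increments and the ultrametric inequality. You work with the limit process directly, which requires importing its L\'evy structure: Vladimirov's kernel and the Poisson nature of the jumps that leave $\Zp$.
\item The paper multiplies survival probabilities over $p\ge M$. This uses independence under the product measure, and gives the lower bound $\exp\bigl(-cT\sum_{p\ge M}\frac{p^b(p-1)}{p^{b+1}-1}\sigma_p\bigr)$, the form it later reuses for tightness. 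Your first Borel--Cantelli step rests on the union bound $\Pb_{\sigma,\Adele}(\exists\, p\ge M:\tau_p\le T)\le T\sum_{p\ge M}\lambda_p$. That bound is just as quantitative and needs no independence, so it would hold for any coupling of the $\Pbp$.
\end{enumerate}

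One correction to your fallback paragraph. The function $e^{-\sigma_p t|\xi|_p^b}$ is not $1$ on $\Zp$, and the bound that route produces is of order $\sigma_p$, not $\sigma_p p^b$.

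Repaired, that route avoids Vladimirov's formula altogether. Since $\Zp$ is open, the image of $Y^{(p)}$ in the discrete group $\Qp/\Zp$ is cadlag with stationary independent increments. It is therefore a continuous-time random walk whose first jump time is $\tau_p$. Its rate is minus the derivative at $t=0$ of $\Pbp(Y^{(p)}_t\in\Zp)=\int_{\Zp}e^{-\sigma_p t|\xi|_p^b}\,d\mu_p(\xi)$, namely $\sigma_p\int_{\Zp}|\xi|_p^b\,d\mu_p(\xi)=\frac{p^b(p-1)}{p^{b+1}-1}\sigma_p$.

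The same projection argument is also the cleanest justification of your claim that the process stays in $\Zp$ until its first jump with $|y|_p>1$, despite infinitely many small jumps. The projected path is piecewise constant, and it changes only at such jumps.
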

\noindent
Let $\mathbf{Y} = (Y^{(p)})_{p \in \Primes}$ be the Kolmogorov coordinate function.
Refer to the stochastic process
    \[ (D(\Adele), \Pb_{\sigma, \Adele}, \mathbf{Y}) \]
as an adelic L\'evy process with diffusion coefficient (sequence) $\sigma$ and diffusion exponent $b$.

\subsection{Random Walks on $\Qp$}

Let $G_p$ be the quotient group
\[ G_p = \Qp / \Zp, \]
and, for any $x$ in $\Qp$, write $\{x\}$ for the equivalence class $x + \Zp$.
The $p$-adic absolute value passes through the quotient as
\[ |\{x\}|_p = \begin{cases}
    |x|_p & \{x\} \neq \{0\} \\
    0 & \{x\} = \{0\},
\end{cases}\]
under which $G_p$ is a discrete topological group.
For any $p$-adic number $x$, there is a unique coefficient function $a_x : \Z \to \{0, \ldots, p - 1 \}$ whose support is bounded below and satisfies
\[ x = \sum_{k \in \Z} a_x(k) p^k. \]
Identify $G_p$ with its standard embedding in $\Qp$, given by
\[ \{x\} \mapsto \sum_{k < 0} a_x(k) p^k, \]
to obtain a $p$-adic analogue of the standard integer lattice in $\R$.

Define the normalization constant $C_{p,b} = p^b - 1$.
Let $X^{(p)}$ be a $G_p$-valued random variable with the following distribution:
for any positive integer $k$,
\[ \Pr(|X^{(p)}|_p = p^k) = \frac{C_{p,b}}{p^{kb}}, \]
with $X^{(p)}$ uniformly distributed on the level sets $|\{ x \}|_p = p^k$. 
Let $X_i^{(p)} \overset{iid}{\sim} X^{(p)}$ be iid copies of $X^{(p)}$, and let 
\[ S^{(p)}_n = \sum_{i = 1}^n X_i^{(p)} \]
define the (discrete-time) $p$-adic random walk on the embedded $G_p$.
Let $D = \frac{p^b(p - 1)}{p^{b+1} - 1} \sigma_p$ and, for each $m \geq 0$, define the random trajectory $\S^{(m)} : [0, \infty) \to \Qp$ by
\[ \S^{(m)}(t) = p^m S_{\floor{D p^{mb} t}}^{(p)} = p^m \sum_{i = 1}^{\floor{D p^{mb} t}} X_i^{(p)} .\]
The constant $D$ is chosen so that all finite-dimensional distributions of the measures $\Pbpm$ converge to those of a $p$-adic L\'evy process with diffusion coefficient $\sigma_p$ and diffusion exponent $b$.

The following results are from \cite{weisbart_p-adic_2024}, or can be specialized from \cite{pierce_brownian_2024}.
Proposition \ref{prop:moment-estimate} is a moment estimate which implies the uniform Chentsov criterion, and is Proposition 5.1 in \cite{pierce_brownian_2024}.
The uniform Chentsov estimate \cite[Theorem 13.5]{billingsley_convergence_1999} obtained in Proposition \ref{prop:moment-estimate} is a tool used to establish weak convergence in the previous works \cite{weisbart_p-adic_2024, pierce_brownian_2024} which we will be able to reuse to obtain Proposition \ref{prop:finite-support}.

\begin{thm}
    The random trajectories $\S^{(m)}$ induce probability measures $\Pbpm$ on $\Qp^{[0,\infty)}$ which give full measure to $D(\Qp)$.
\end{thm}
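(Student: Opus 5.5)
We prove the statement directly from the construction of $\S^{(m)}$; no tightness or moment estimate is needed, since every sample path of $\S^{(m)}$ is a step function. The plan is to realize the iid increments $X_i^{(p)}$ on a single probability space $(\Omega, \mathcal{F}, \Pr)$, for instance the countable product of copies of $G_p$ with the product of the laws of $X^{(p)}$. We then regard $\S^{(m)}$ as a map $\Omega \to \Qp^{[0,\infty)}$, $\omega \mapsto \S^{(m)}(\cdot,\omega)$, and define $\Pbpm$ as the pushforward of $\Pr$ under this map.

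First we check measurability. For each fixed $t$, $\S^{(m)}(t)$ is a finite sum of $\floor{D p^{mb} t}$ measurable $G_p$-valued random variables, scaled by $p^m$, so it is a $\Qp$-valued random variable. Hence the map is measurable for the product (cylinder) $\sigma$-algebra on $\Qp^{[0,\infty)}$. This gives a well-defined probability measure $\Pbpm$ on $\Qp^{[0,\infty)}$, and by construction its finite-dimensional distributions are those of $(\S^{(m)}(t_1), \ldots, \S^{(m)}(t_k))$.

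Second we check path regularity pointwise in $\omega$. The function $t \mapsto \floor{D p^{mb} t}$ is a nondecreasing, right-continuous integer-valued step function. It jumps only at the points $t_j = j/(D p^{mb})$, which form a locally finite set. So $t \mapsto \S^{(m)}(t,\omega)$ is constant on each interval $[t_j, t_{j+1})$. Such a path is right continuous, and it has left limits at every point, equal to the value on the preceding interval. The path therefore lies in $D(\Qp)$ for every $\omega$, not merely almost every $\omega$, so the image of $\Omega$ sits inside $D(\Qp)$.

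The main obstacle is measure-theoretic. $D(\Qp)$ is not a cylinder-measurable subset of $\Qp^{[0,\infty)}$, so "gives full measure" has to be read in the usual sense: $D(\Qp)$ has outer measure one, or equivalently $\Pbpm$ is carried by $D(\Qp)$ in the sense that it induces a measure on $D(\Qp)$ with the trace $\sigma$-algebra. We handle this by noting that the map $\omega \mapsto \S^{(m)}(\cdot,\omega)$ takes values in $D(\Qp)$. Its measurability with respect to the Borel $\sigma$-algebra of the $J_1$ topology follows because that $\sigma$-algebra coincides with the one generated by the coordinate projections (Billingsley, Theorem 16.6, whose proof applies to cadlag paths in any complete separable metric space, in particular $\Qp$). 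Pushing $\Pr$ forward into $D(\Qp)$ therefore gives a Borel probability measure on $D(\Qp)$ whose extension to $\Qp^{[0,\infty)}$ is $\Pbpm$. Since both measures have the same finite-dimensional distributions, they agree on cylinder sets, and any cylinder set containing $D(\Qp)$ has $\Pbpm$-measure one.
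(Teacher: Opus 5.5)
Your argument is correct. There is also no in-paper proof to compare it with: the paper states this theorem without proof and imports it from \cite{weisbart_p-adic_2024} and \cite{pierce_brownian_2024}, together with Proposition~\ref{prop:moment-estimate} and the $p$-adic scaling limit. Your proof is a self-contained replacement for that citation, and it uses only the step-function structure of $\S^{(m)}$. It also makes explicit what ``gives full measure to $D(\Qp)$'' has to mean, since $D(\Qp)$ is not in the product $\sigma$-algebra.

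Two small remarks. First, the outer-measure-one statement does not need the projection theorem. Let $\Phi$ be your path map on your sample space $\Omega$, and let $A$ be any product-measurable set containing $D(\Qp)$. Then $\Phi(\Omega)\subseteq D(\Qp)$ gives $\Phi^{-1}(A)=\Omega$, so $\Pbpm(A)=1$ directly, with no detour through finite-dimensional distributions. The projection theorem (the $J_1$ Borel $\sigma$-algebra equals the one generated by coordinate projections, valid because $\Qp$ is separable) is needed only for a stronger conclusion: that $\Pbpm$ is a Borel probability measure on $D(\Qp)$ with the Skorokhod topology. That is the form in which these measures are used later, for instance when Theorem~\ref{BillingsleyConditions} is applied, so it is worth keeping.

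Second, your jump times $j/(Dp^{mb})$ are undefined when $\sigma_p=0$, which the adelic section allows. In that case $\S^{(m)}\equiv 0$, so the claim is trivial and nothing breaks.
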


\begin{prop}
\label{prop:moment-estimate}
    For any $r$ in $(0,b)$, there is a constant $C$ so that, for any $t$ in $[0,\infty)$,
    \[ \mathds{E}_{\Pbpm}(|Y_t|_p^r) \leq C t^{r/b}, \]
    where $\mathds{E}_{\Pbpm}$ denotes expectation with respect to $\Pbpm$.
\end{prop}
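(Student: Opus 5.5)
We bound $\mathds{E}_{\Pbpm}(|Y_t|_p^r)$ through the Fourier transform of the single-step law: write the $p$-adic absolute value of the sum in terms of characters, then control the resulting character sums uniformly in $m$. Under $\Pbpm$ we have $Y_t = p^m S^{(p)}_n$ with $n = \floor{D p^{mb} t}$, so
\[ |Y_t|_p^r = p^{-mr} |S^{(p)}_n|_p^r . \]
If $n = 0$ the expectation vanishes. Otherwise $D p^{mb} t \geq 1$, hence $n \leq D p^{mb} t \leq 2n$, and it suffices to prove the discrete estimate
\[ \mathds{E}\bigl(|S^{(p)}_n|_p^r\bigr) \leq C' n^{r/b} \qquad (n \geq 1). \]
Given this,
\[ \mathds{E}_{\Pbpm}(|Y_t|_p^r) \leq C' p^{-mr} (D p^{mb} t)^{r/b} = C' D^{r/b} t^{r/b}, \]
which is uniform in $m$ and $t$.

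For the discrete estimate, use the layer-cake formula on the lattice of values $p^k$:
\[ \mathds{E}\bigl(|S_n|_p^r\bigr) \leq \sum_{k \geq 1} p^{kr} \Pr\bigl(|S_n|_p \geq p^k\bigr). \]
Split the sum at $k_0$, where $p^{k_0 b} \approx n$.

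\emph{Terms with $k \leq k_0$.} Bound each probability by $1$. This gives a geometric sum of order $p^{k_0 r} \approx n^{r/b}$.

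\emph{Terms with $k > k_0$.} Here we need the tail bound
\[ \Pr\bigl(|S_n|_p \geq p^k\bigr) \leq C n p^{-kb}. \]
The union bound over the steps is enough. By ultrametricity, $|S_n|_p \geq p^k$ forces some $|X_i|_p \geq p^k$, and
\[ \Pr\bigl(|X|_p \geq p^k\bigr) = \sum_{j \geq k} C_{p,b} p^{-jb} = p^{-(k-1)b}. \]
Hence $\Pr(|S_n|_p \geq p^k) \leq n p^{b} p^{-kb}$. Summing $p^{kr} \cdot n p^{b-kb}$ over $k > k_0$ converges because $r < b$, and yields $C n p^{k_0(r-b)} \approx C n^{r/b}$.

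This gives the estimate without Fourier analysis. The condition $r<b$ enters exactly in the convergence of the tail sum, and this is the only delicate point. Choose $k_0 = \ceil{\log_p(n)/b}$ to make the two halves match. The constants depend only on $p$, $b$, $r$ and $\sigma_p$ through $D$.
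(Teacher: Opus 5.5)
The paper does not prove this proposition itself; it imports it as Proposition 5.1 of \cite{pierce_brownian_2024}. There is therefore no in-paper argument to compare against, and your proof works as a correct, self-contained replacement.

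The steps check out:
\begin{itemize}
\item $|S_n|_p$ takes values in $\{0\}\cup\{p^k : k\geq 1\}$, so the shell sum bounds the moment.
\item $\Pr(|X|_p\geq p^k)=p^{-(k-1)b}$.
\item Ultrametricity in $G_p$ gives $|S_n|_p\leq\max_{i\leq n}|X_i|_p$.
\item With $k_0=\ceil{\log_p(n)/b}$ one has $p^{k_0}<p\,n^{1/b}$ and $p^{-k_0(b-r)}\leq n^{-(b-r)/b}$. This gives the explicit bound $\mathds{E}|S_n|_p^r\leq p^r\bigl((1-p^{-r})^{-1}+(1-p^{r-b})^{-1}\bigr)n^{r/b}$.
\item The cancellation $p^{-mr}(p^{mb})^{r/b}=1$ then gives the uniformity in $m$ that the Chentsov application needs.
\end{itemize}

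Compared with quoting the general local-field result, your route uses nothing beyond the explicit step law, and it proves slightly more. Since $\max_{i\leq n}|X_i|_p=\max_{j\leq n}|S_j|_p$, your union bound is the Bernoulli-inequality relaxation $1-(1-p^{-(k-1)b})^n\leq n\,p^{-(k-1)b}$ of the exact survival formula $(1-p^{-bk})^n$ in the proof of Lemma \ref{lemma:survival-probability-lemma}. The same computation therefore yields the maximal bound $\mathds{E}_{\Pbpm}\bigl(\sup_{s\leq t}|Y_s|_p^r\bigr)\leq C t^{r/b}$. This ties the estimate directly to the survival analysis the paper uses elsewhere.

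Two cosmetic points:
\begin{itemize}
\item Drop the opening sentence announcing a Fourier/character-sum argument, which you never carry out.
\item Drop the inequality $D p^{mb}t\leq 2n$; only $n\leq D p^{mb}t$ is used.
\end{itemize}
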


\begin{thm}
    The random walks $\S^{(m)}$ converge weakly to a $p$-adic L\'evy process with diffusion coefficient $\sigma_p$ and diffusion exponent $b$.
\end{thm}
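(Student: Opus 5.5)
The plan is the standard two-step route to weak convergence in $D(\Qp)$ with the $J_1$ topology: first show that the finite-dimensional distributions of $\Pbpm$ converge to those of $\Pbp$, then show that the family $\{\Pbpm\}_m$ is tight, and combine the two with \cite[Theorem 13.1]{billingsley_convergence_1999}. The limit process has no fixed jump times. Hence every finite set of times is a continuity set, and convergence of the finite-dimensional distributions at all times suffices.

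\textbf{Finite-dimensional distributions.} By independence of the increments, it suffices to treat a single increment $\S^{(m)}(t)-\S^{(m)}(s)$. Its law is the law of $p^m$ times a sum of $\floor{Dp^{mb}t}-\floor{Dp^{mb}s}$ iid copies of $X^{(p)}$. I will compute the characteristic function $\phi(\xi)=\mathds{E}\,\chi(\xi X^{(p)})$ explicitly, using the fact that $X^{(p)}$ is uniform on each sphere $|x|_p=p^k$.
\begin{itemize}
\item The integral of a character over the sphere of radius $p^k$ equals $\mu_p$ of the ball of radius $p^k$ minus $\mu_p$ of the ball of radius $p^{k-1}$, provided $|\xi|_p\le p^{-k}$. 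It equals $-p^{k-1}$ when $|\xi|_p=p^{-k+1}$, and it equals $0$ otherwise.
\item Summing the geometric series then gives, for $|\xi|_p=p^{j}$ with $j\le 0$,
\[ 1-\phi(\xi) = \frac{p^{b+1}-1}{p^b(p-1)}\,|\xi|_p^b. \]
\item Consequently, for fixed $\xi$ and $m$ large,
\[ \phi(p^m\xi)^{\floor{Dp^{mb}t}} = \left(1-\sigma_p D^{-1}p^{-mb}|\xi|_p^b\right)^{\floor{Dp^{mb}t}} \to \exp(-\sigma_p t|\xi|_p^b). \]
\end{itemize}
This limit is exactly the Fourier transform of $f(t,\cdot;p)$, so the value of $D$ is forced by this computation. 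Lévy continuity on the locally compact group $\Qp$ then gives convergence of the one-dimensional laws, and independence of increments upgrades this to convergence of all finite-dimensional distributions.

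\textbf{Tightness.} I will use the Chentsov-type criterion \cite[Theorem 13.5]{billingsley_convergence_1999}. It requires, for $t_1<t<t_2$,
\[ \mathds{E}\big[|Y_t-Y_{t_1}|^{r}|Y_{t_2}-Y_t|^{r}\big]\le C (t_2-t_1)^{2\alpha}, \qquad \alpha>1/2. \]
The two increments are independent (disjoint index blocks), and by stationarity each is distributed as $Y_{u}$ for a lattice-rounded time $u$. Proposition \ref{prop:moment-estimate} therefore bounds the product by
\[ C^2 (t-t_1)^{r/b}(t_2-t)^{r/b}\le C^2(t_2-t_1)^{2r/b}. \]
Here I will be careful that the rounding $\floor{\cdot}$ makes the effective times at most of order $t-t_1+p^{-mb}$. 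When $t_2-t_1<D^{-1}p^{-mb}$, one of the two increments is zero almost surely, so the product vanishes and the estimate holds in every case.

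\textbf{Main obstacle.} The difficulty is that $r$ must lie in $(0,b)$ while we need $2r/b>1$. Choosing $r\in(b/2,b)$ satisfies both requirements. Combined with the compact containment at time $0$ (where $Y_0=0$) and the condition on right-end behaviour in Theorem 13.5, which follows from the same bound via stochastic continuity at $t$, this gives tightness and hence weak convergence.
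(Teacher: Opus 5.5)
Your proposal is correct and follows the same route the paper relies on through the cited works \cite{weisbart_p-adic_2024, pierce_brownian_2024}: convergence of finite-dimensional distributions from the explicit characteristic function, which also forces the value of $D$, and tightness from the uniform Chentsov criterion \cite[Theorem 13.5]{billingsley_convergence_1999}, fed by the moment estimate of Proposition \ref{prop:moment-estimate} with $r \in (b/2, b)$. Your treatment of the floor-function rounding supplies the detail needed to make that estimate uniform in $m$: the product of increments vanishes when $t_2 - t_1 < D^{-1}p^{-mb}$, and otherwise the effective times are at most $2(t_2 - t_1)$.
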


\subsection{Weak Convergence in the Strong $J_1$ Topology}

The product Skorokhod space $\prod_{p \in \Primes} D(\Qp)$ is a Polish space with the product topology, the rarely-used ``weak'' $J_1$ topology \cite{kern_skorokhod_2024}. 
\TZG showed \cite{torba_parabolic_2013} that the adelic absolute value $|\cdot|_{\Adele}$ given by
\[ |x|_{\Adele} = \max_{p \in \Primes} \frac{|x|_p}{p}, \quad x = (x_p)_{p \in \Primes} \in \Adele \]
induces a complete metric on $\Adele$, though neither the weak $J_1$ topology nor the operator $\Delta_{\sigma, \Adele}$ require a choice of metric on $\Adele$.
The adelic metric gives rise to the standard ``strong'' $J_1$ topology, under which $D(\Adele)$ is Polish.


Since both topologies are Polish, and hence sequential, and since a convergent sequence in the strong $J_1$ topology is also convergent in the weak $J_1$ topology, both define the same Borel sigma-algebra \cite[Theorem 11.5.3]{whitt_stochastic-process_2002}.
However, although they support the same collection of probability measures, weak convergence for the weak $J_1$ topology is just weak convergence of the marginals on each factor $D(\Qp)$, while weak convergence in the strong $J_1$ topology is more involved.
Standard theorems for proving weak convergence in the strong $J_1$ topology rely on the metric on the state space \cite[Chapter 16]{billingsley_convergence_1999}.

Instead of relying on a uniform Chentsov estimate for the adelic processes directly, we will use a characterization of uniform tightness for sequences of probability measures.
Let the state space $\Omega$ be either $\Qp$ or $\Adele$, with norm denoted by $|\cdot|_\Omega$.
For an interval $I$ and $\Omega$-valued function $x$, define $w(x,I; \Omega)$ to be
\[ w(x,I ; \Omega) = \sup_{s,t \in I} |x(s) - x(t)|_\Omega. \]
Call a partition $0 = t_0 < \cdots < t_v = T$ of an interval $[0,T)$ essentially $\delta$-sparse if $t_i - t_{i - 1} > \delta$ for $1 \leq i < v$, and let $\Delta(T, \delta)$ denote the collection of all essentially $\delta$-sparse partitions of $[0,T)$.
Define the modified modulus of continuity 
\[ w_T^\prime(x,\delta; \Omega) = \inf_{\Delta(T,\delta)} \max_{1 \leq i \leq v} w(x,[t_{i - 1}, t_i); \Omega). \]
For an $\Adele$-valued function $x = (x_p)_{p \in \Primes}$, exchange suprema to obtain the relation
\[ w_T^\prime(x, \delta; \Adele) = \inf_{\Delta(T,\delta)} \max_{1 \leq i \leq v} \max_{p \in \mathcal{P}} \frac{w(x_p, [t_{i - 1}, t_i); \Qp)}{p}. \]
Tightness of a sequence of probability measures on $D(\Omega)$ with respect to the strong $J_1$ topology is characterized by the modified modulus of continuity and the sup norm for all times $T$.
Convergence of all finite-dimensional distributions then establishes weak convergence.

\begin{thm}[Theorem 16.8 of \cite{billingsley_convergence_1999}]
\label{BillingsleyConditions}
    A sequence of probability measures $(P_n)_{n = 1}^\infty$ is tight in the $J_1$ topology if and only if:
    \begin{enumerate}
        \item For each $T$,
        \[ \lim_{\lambda \to \infty} \limsup_n P_n(x : \sup_{s \leq T} |x(s)|_{\Omega}  \geq \lambda) = 0, \]
        and
        \item for each $T$ and $\lambda$, 
        \[ \lim_{\delta \to 0} \limsup_n P_n(x : w_T^\prime(x, \delta; \Omega) \geq \lambda) = 0.\]
    \end{enumerate}
\end{thm}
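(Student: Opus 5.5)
We follow Billingsley's argument: reduce tightness to a description of the relatively compact subsets of $D(\Omega)$, an Arzel\`a--Ascoli theorem for the Skorokhod space. We treat $[0,T]$ first and then pass to $[0,\infty)$ using the usual metric $d^\circ=\int_0^\infty e^{-T}\min(1,d_T(\cdot,\cdot))\,dT$, where $d_T$ is the Skorokhod metric on the paths cut off near time $T$.

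\textbf{Compactness criterion.} We would aim for the following statement. A set $A\subset D(\Omega)$ has compact closure if and only if, for every $T$, the values $\{x(t): x\in A,\ t\le T\}$ lie in a compact subset of $\Omega$ and $\lim_{\delta\to0}\sup_{x\in A}w'_T(x,\delta;\Omega)=0$.
\begin{itemize}
\item The ``only if'' direction uses two facts. First, $x\mapsto \sup_{t\le T}|x(t)|_\Omega$ is continuous at continuity points $T$. Second, $w'_T(\cdot,\delta)$ is upper semicontinuous in a suitable sense, so a Dini-type argument on the compact closure gives uniform convergence.
\item For the ``if'' direction, take a sequence in $A$. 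Use the sparse partitions from the modulus condition to approximate each path by a step function with boundedly many jumps, with jump values in the compact set. Extract a subsequence in which the jump times and the values converge, by a diagonal argument. Then build time changes $\lambda_n$ that converge to the identity and carry the partition points onto one another. This gives a Skorokhod-convergent subsequence.
\end{itemize}

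\textbf{From compactness to the tightness conditions.}
\begin{itemize}
\item \emph{Necessity.} Given tightness and $\epsilon>0$, choose a compact $K$ with $P_n(K)>1-\epsilon$ for all $n$. The criterion bounds $\sup_{s\le T}|x(s)|_\Omega$ uniformly on $K$, and it makes $w'_T(x,\delta)$ uniformly small on $K$ once $\delta$ is small. This yields conditions 1 and 2.
\item \emph{Sufficiency.} We would first replace $\limsup_n$ by $\sup_n$. Each single measure $P_n$ on the Polish space $D(\Omega)$ is tight, and for each fixed cadlag path $w'_T(x,\delta)\to0$ as $\delta\to0$. Hence for the finitely many excluded $n$ we can shrink $\lambda$ and $\delta$ further.
\item Then, given $\epsilon$, for each integer $T$ and each $k$ pick $\lambda_T$ and $\delta_{T,k}$ with
\[
\sup_n P_n\Big(\sup_{s\le T}|x(s)|_\Omega\ge\lambda_T\Big)<\epsilon 2^{-T},\qquad \sup_n P_n\big(w'_T(x,\delta_{T,k})\ge 1/k\big)<\epsilon 2^{-T-k}.
\]
The intersection $A$ of the complements has $P_n(A)\ge 1-2\epsilon$, and $A$ satisfies the criterion.
\end{itemize}

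\textbf{Main obstacle.} The delicate step is the compactness criterion, specifically the requirement that the values lie in a compact set. The criterion needs this compact containment, but condition 1 only gives norm-boundedness. That suffices when closed balls are compact, which holds for $\Qp$. For $\Adele$ with $|\cdot|_{\Adele}$ it does not: points differing by $1/p$ in the $p$-th coordinate alone are pairwise at distance $1$. Closed balls are therefore not totally bounded, and the cited theorem, as stated for a general $\Omega$, must be read with condition 1 strengthened to compact containment.

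\textbf{How we would handle it.} Either prove the theorem for locally compact $\Omega$ with compact closed balls, or replace condition 1 by: for each $\epsilon$ there is a compact $K_\epsilon\subset\Omega$ with $\limsup_n P_n(x(t)\notin K_\epsilon\text{ for some }t\le T)\le\epsilon$. With that replacement the argument above goes through verbatim. In the adelic application, $K_\epsilon$ would be a product of $p$-adic balls that equal $\Zp$ for all but finitely many $p$, which is compact.
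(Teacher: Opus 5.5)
Your proposal is correct and follows the argument behind the cited result. The paper gives no proof beyond citing Billingsley, who proves Theorem 16.8 for real-valued paths as you do, from the Arzel\`a--Ascoli-type description of relatively compact subsets of $D[0,\infty)$. For a general complete separable state space the same argument gives the Ethier--Kurtz criterion (\emph{Markov Processes}, Chapter 3), where compact containment replaces boundedness.

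Your ``main obstacle'' is also correct: for $\Omega=\Adele$ the statement is false. Let $e_p\in\Adele$ have $p$-th coordinate $1/p$ and all others $0$. Let $P_n$ be the point mass at the constant path $e_{p_n}$, where $p_n$ is the $n$-th prime. Then $\sup_{s\le T}|x(s)|_{\Adele}=1$ and $w'_T(x,\delta;\Adele)=0$ for $P_n$-almost every $x$, so conditions 1 and 2 hold. But $|e_p-e_q|_{\Adele}=1$ for $p\neq q$, so these paths have no convergent subsequence in the strong $J_1$ topology, and $(P_n)$ is not tight. Billingsley's proof relies on bounded subsets of $\R$ being relatively compact. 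That holds in $\Qp$, but in $(\Adele,|\cdot|_{\Adele})$ every closed ball about $0$ of radius at least $1$ contains all the $e_p$, even though the open unit ball $\prod_p\Zp$ is compact.

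This affects the paper's final theorem. Proposition \ref{prop:prove-billingsley-1} only verifies norm-boundedness, so the tightness claim needs your strengthened condition 1. The paper's own lemmas supply it:
\begin{itemize}
\item $\Lambda(T,M,1)$ is exactly the event that every component with $p\ge M$ stays in $\Zp$ up to time $T$.
\item Lemma \ref{lemma:small-tail-probability} with $\lambda=1$ gives this event probability above $1-\epsilon$ for all large $m$.
\item The finitely many components with $p<M$ are tight in $D(\Qp)$, whose closed balls are compact.
\end{itemize}
So $K=\prod_{p<M}p^{-k}\Zp\times\prod_{p\ge M}\Zp$ with $k$ large works, as you anticipated.

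One thin spot in your sketch: in the corrected compactness criterion, the ``only if'' direction must produce compact containment. Continuity of $x\mapsto\sup_{t\le T}|x(t)|_\Omega$ gives only boundedness. Use instead that if $x_n\to x$ in $J_1$ and $t_n\to t$, then every limit point of $x_n(t_n)$ lies in $\{x(t-),x(t)\}$. Hence the values up to time $T$ of a compact family of paths form a relatively compact subset of $\Omega$.
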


\section{Random Walks on the Adeles and Weak Convergence}

Fix a diffusion exponent $b > 0$ throughout.
Let $\sigma = (\sigma_p)_{p \in \Primes}$ be a summable sequence of diffusion coefficients, and for each $m$ let $\Pbpm$ be the random walk measure on $\Qp$ with diffusion coefficient $\sigma_p$.
Initially define the random walks measure 
\[ \PbmAdele = \prod_{p \in \Primes} \Pbpm \]
on the product Skorokhod space $\prod_{p \in \Primes} D(\Qp)$.
Denote the associated random walk process by $\Sm$.
Let $D$ denote the quantity
\[ D \equiv D(p,\sigma, b) = \frac{p^b(p - 1)}{p^{b + 1} - 1} \sigma_p. \]
For any time $t$, write the coordinates of the random walk $\mathbf{S}^{(m)}$ as
\[ \Sm(t) = \left(p^m S^{(p)}_{\floor{D p^{mb} t}}\right)_{p \in \Primes}. \]
For any prime $p$, denote the restriction of the adelic absolute value $| \cdot |_{\Adele}$ to $\Qp$ as
\[ | \cdot |_{p, \Adele} = \frac{|\cdot|_p}{p}.\]
For any positive $\lambda$ and prime $p$, write
\[ [\lambda]_p = p^{\ceil{\log_p(\lambda)} - 1}. \]
Note that, for any integer $k$, $p^k < \lambda$ if and only if $p^k \leq [\lambda]_p$, and
\[ -p^b \lambda^{-b} \leq - [\lambda]_p^{-b} < - \lambda^{-b}. \]
The following lemma computes survival probabilities for the adelic random walks in each component $\Qp$, which will be used to show that each random walk is almost surely valued in the adeles.

\begin{lemma}
\label{lemma:survival-probability-lemma}
    For any $m, T$, and $p$, if $1 \leq m + \ceil{\log_p \lambda}$, then
    \[ \Pbpm \left( \sup_{s \leq T} | p^m S^{(p)}_{\floor{D p^{mb} s}} |_{p, \Adele} < \lambda \right) = \left( 1 - \frac{(p [\lambda]_p)^{-b}}{p^{mb}} \right)^{\floor{D p^{mb} T}}. \]
\end{lemma}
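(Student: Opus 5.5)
The plan is to reduce the supremum event to a statement about the individual increments $X_i^{(p)}$, using the ultrametric structure of $G_p$, and then use independence. Write $N = \floor{D p^{mb} T}$. Since $s \mapsto \floor{D p^{mb} s}$ takes every integer value in $\{0,1,\ldots,N\}$ as $s$ ranges over $[0,T]$, the event in the statement is
\[ \left\{ \max_{0 \leq k \leq N} p^{-m-1} |S^{(p)}_k|_p < \lambda \right\} = \left\{ \max_{0 \leq k \leq N} |S^{(p)}_k|_p < p^{m+1}\lambda \right\}. \]
Here I use $|p^m y|_p = p^{-m}|y|_p$ and $|\cdot|_{p,\Adele} = |\cdot|_p/p$.

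\emph{Step 1: discretize the radius.} The values $|S_k^{(p)}|_p$ are integer powers of $p$ or $0$. By the remark preceding the lemma, $p^k < p^{m+1}\lambda$ if and only if $p^k \leq [p^{m+1}\lambda]_p$. Moreover $[p^{m+1}\lambda]_p = p^{m+1}[\lambda]_p = p^{j}$ with $j = m + \ceil{\log_p \lambda}$. The event therefore becomes $\{|S_k^{(p)}|_p \leq p^j \text{ for all } k \leq N\}$. The hypothesis of the lemma says exactly that $j \geq 1$.

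\emph{Step 2: ultrametric reduction.} The closed ball $B_j = \{y : |y|_p \leq p^j\}$ is an additive subgroup, by the strong triangle inequality. If every increment $X_i^{(p)}$, $i \leq N$, lies in $B_j$, then every partial sum does too. Conversely, if all $S_k^{(p)}$ lie in $B_j$, then $X_k^{(p)} = S_k^{(p)} - S_{k-1}^{(p)} \in B_j$. So the event equals $\bigcap_{i=1}^N \{ |X_i^{(p)}|_p \leq p^j \}$. By independence and identical distribution its probability is $\Pr(|X^{(p)}|_p \leq p^j)^N$; when $N = 0$ both sides equal $1$.

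\emph{Step 3: compute the one-step probability.} Since $|X^{(p)}|_p \in \{p, p^2, \ldots\}$ with $\Pr(|X^{(p)}|_p = p^k) = (p^b-1)p^{-kb}$ summing to $1$ over $k \geq 1$, we get for $j \geq 1$ (in fact for $j\ge 0$)
\[ \Pr(|X^{(p)}|_p \leq p^j) = 1 - \sum_{k > j} (p^b - 1) p^{-kb} = 1 - p^{-jb}. \]
Finally $p^{-jb} = p^{-mb}\,(p[\lambda]_p)^{-b}$, because $p[\lambda]_p = p^{\ceil{\log_p\lambda}}$. This gives the stated formula.

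The only delicate point is the bookkeeping in Step 1. One has to check that the strict inequality with $\lambda$ converts correctly to a non-strict inequality with the power $p^j$, which is where $[\cdot]_p$ enters. One also has to check that the hypothesis $j \geq 1$ is what makes the geometric tail sum valid; otherwise the ball would contain no nonzero lattice points and the formula would change. The subgroup argument in Step 2 is the conceptual heart, but it is routine.
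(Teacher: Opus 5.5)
Your proof is correct and follows the paper's argument: rewrite the supremum event as the non-strict bound $|S^{(p)}_k|_p \le p^{m+1}[\lambda]_p = p^{j}$ for all $k \le \lfloor D p^{mb} T \rfloor$. Then use the ultrametric (subgroup) property and independence to reduce to $\Pr(|X^{(p)}|_p \le p^{j})^{N}$, and sum the geometric series to get $1 - p^{-jb}$.

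The differences are cosmetic. You discretize $\lambda$ after rescaling rather than before, and your converse via $X^{(p)}_k = S^{(p)}_k - S^{(p)}_{k-1}$ spells out what the paper compresses into ``by the ultrametric property.''
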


\begin{proof}
    For any positive integer $k$, by the ultrametric property and since $X^{(p)}_i \overset{iid}{\sim} X^{(p)}$ are iid, 
    \begin{align*} 
    \Pr \left( \sup_{j \leq n} |S^{(p)}_j|_p \leq p^k \right) & = \Pr \left( |X^{(p)}|_p \leq p^k \right)^n \\
    & = \left(\sum_{j = 1}^k \frac{p^b - 1}{p^{jb}} \right)^n \\
    & = \left( 1 - p^{-{bk}} \right)^n.
    \end{align*}
The desired probability is then
\begin{align*}
    \Pbpm \left( \sup_{s \leq T} |p^m S^{(p)}_{\floor{D p^{mb} s}}|_{p, \Adele} < \lambda \right) & = \Pbpm \left( \sup_{s \leq T} |p^m S^{(p)}_{\floor{D p^{mb} s}}|_{p, \Adele} \leq [\lambda]_p \right) \\
    & = \Pbpm \left( \sup_{s \leq T} |p^m S^{(p)}_{\floor{D p^{mb} s}}|_p \leq p [\lambda]_p \right) \\
    & = \Pbpm \left( \sup_{s \leq T} |S^{(p)}_{\floor{D p^{mb} s}}|_p \leq p^m p [\lambda]_p \right) \\
    & = \Pbpm \left( \sup_{j \leq \floor{D p^{mb} T}} |S^{(p)}_j|_p \leq p^m p [\lambda]_p \right) \\
    & = \left( 1 - \frac{(p [\lambda]_p)^{-b}}{p^{mb}} \right)^{\floor{D p^{mb} T}}.
\end{align*}
\end{proof}

A path $x = (x_p)_{p \in \Primes}$ in the product Skorokhod space is valued in $\Adele$ up to time $T$ if and only if, for all times $s \leq T$, all but finitely many components $x_p(s)$ are in $\Zp$.
Under the adelic absolute value,
\[ |\Zp|_{p, \Adele} = p^{-1} \]
for every prime $p$, so $x$ is valued in $\Adele$ up to time $T$ if and only if, for every $s \leq T$, 
\[ |x_p(s)|_{p, \Adele} \leq p^{-1} \]
for all but finitely many primes $p$.
Then a sufficient condition for $x$ to be valued in $\Adele$ up to time $T$ is that there exist a finite $M$ such that
\[ \sup_{s \leq T} |x_p(s)|_{p, \Adele} \leq p^{-1} \text{ for all $p \geq M$}. \]

\begin{thm}
    For any $m$, the random walks $\mathbf{S}^{(m)}$ are almost surely valued in $\Adele$ for all time.
\end{thm}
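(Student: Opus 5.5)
We will use a Borel--Cantelli argument over primes, driven by the survival probabilities of Lemma \ref{lemma:survival-probability-lemma}. Fix $m$ and an integer time horizon $T$. It suffices to show that, $\PbmAdele$-almost surely, there is a finite $M$ with $\sup_{s \leq T} |p^m S^{(p)}_{\floor{D p^{mb} s}}|_{p,\Adele} \leq p^{-1}$ for all $p \geq M$. By the sufficient condition stated just before the theorem, such a path is then $\Adele$-valued up to time $T$. Intersecting these almost-sure events over $T \in \mathds{N}$ (a countable intersection) gives the claim for all time.

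For each prime $p$, let $E_p$ be the event that $\sup_{s \leq T} |p^m S^{(p)}_{\floor{D p^{mb} s}}|_{p,\Adele} > p^{-1}$, which is equivalent to $\sup_{s\le T}|\cdot|_p > 1$. The complement is the event that the supremum is $\leq p^{-1}$, that is, $<\lambda$ for any $\lambda$ slightly above $p^{-1}$ (say $\lambda = 1$, so that $[\lambda]_p = p^{-1}$). Note that $\ceil{\log_p 1} = 0$, so the hypothesis $1 \leq m + \ceil{\log_p\lambda}$ requires $m \geq 1$. For $m=0$ I would instead use the first display of the lemma's proof directly with $k = 0$: the events $|X^{(p)}|_p \le 1$ then have probability $0$, so $m=0$ needs separate treatment. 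I suspect the intended reading is $m \ge 1$, or else one handles the survival formula for $k = m$ directly via $(1-p^{-bm})^n$. Applying the lemma, or the direct computation $\Pr(\sup_{j\le n}|S_j|_p \le p^m) = (1-p^{-bm})^n$, with $n = \floor{D p^{mb}T}$, gives
\[ \Pbpm(E_p) = 1 - \left(1 - p^{-mb}\right)^{\floor{D p^{mb} T}} \leq \floor{D p^{mb} T}\, p^{-mb} \leq D T, \]
by Bernoulli's inequality. Since $D = \frac{p^b(p-1)}{p^{b+1}-1}\sigma_p \leq \sigma_p$ (because $p^{b+1} - p^b \leq p^{b+1}-1$), we obtain $\Pbpm(E_p) \leq \sigma_p T$.

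Because $\sigma$ is summable, $\sum_p \PbmAdele(E_p) \leq T \sum_p \sigma_p < \infty$. Under the product measure the $E_p$ are events on distinct coordinates, and Borel--Cantelli (the first lemma, which does not even need independence) shows that almost surely only finitely many $E_p$ occur. This supplies the finite $M$. One also needs the event to be measurable in the product Skorokhod space. This holds because the suprema over $s \le T$ of cadlag paths can be taken over rational $s$ together with $T$.

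The main obstacle I anticipate is purely bookkeeping: matching the threshold $p^{-1}$ with the lemma's hypothesis $1 \le m + \ceil{\log_p \lambda}$ and the notation $[\lambda]_p$, particularly the edge case $m = 0$, where $S^{(p)}$ lives in $G_p$ with nonzero steps of norm at least $p$. In that case $E_p$ has probability $1 - 0^{n}$, which is $1$ whenever $\floor{D T} \geq 1$, and the argument fails unless $\sigma_p T < 1$ eventually. Since $\sigma_p \to 0$, we have $\floor{D T} = 0$ for all large $p$, so $E_p$ is then empty for large $p$. Thus $m=0$ also works, by the trivial observation that the walk has not yet moved. In general the key estimate is $\Pbpm(E_p) \le \min(1, \sigma_p T)$, which is summable in $p$.
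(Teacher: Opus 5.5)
Your proof is correct. Its skeleton matches the paper's: for each fixed $T$, summability of $\sigma$ shows that almost surely all but finitely many coordinates stay in $\Zp$ up to time $T$, and one then intersects over countably many $T$. The paper's event $A(T,M;m)$ is exactly the complement of $\bigcup_{p\ge M}E_p$.

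The difference lies in the key estimate. The paper uses independence of the coordinates under the product measure to compute $\PbmAdele(A(T,M;m))=\prod_{p\ge M}(1-p^{-mb})^{\floor{Dp^{mb}T}}$ exactly. It then bounds this below by $\exp(-cT\sum_{p\ge M}D)$, using the comparison $\exp(-cp^{-mb})\le 1-p^{-mb}$ for $p\ge p_c$. You instead bound each marginal by Bernoulli's inequality, $\Pbpm(E_p)\le\sigma_pT$, and invoke the first Borel--Cantelli lemma, which here amounts to the union bound $\PbmAdele(\bigcup_{p\ge M}E_p)\le T\sum_{p\ge M}\sigma_p$.

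Your route needs no independence across primes and no auxiliary constant $c$ or threshold $p_c$. It also covers $m=0$ cleanly. There the paper's comparison inequality fails (its right side is $0$), and Lemma~\ref{lemma:survival-probability-lemma} does not apply with $\lambda=1$, so the paper tacitly assumes $m\ge 1$. Since $1-(1-x)^n\le nx$ holds at $x=1$ as well, your separate case analysis for $m=0$ is actually unnecessary.

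The paper's exact product formula is the computation it reuses, for general $\lambda$ and after letting $m\to\infty$, in Lemma~\ref{lemma:small-tail-probability} and Proposition~\ref{prop:prove-billingsley-1}. Your union bound would give the analogous estimate $\PbmAdele(\Lambda(T,M,\lambda)^c)\le T\lambda^{-b}\sum_{p\ge M}\sigma_p$ for all large $m$. That would avoid interchanging the limit in $m$ with the infinite product there.
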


\begin{proof}
    Fix a constant $c > 1$.
    There is a $p_c$ such that, if $p \geq p_c$, then
        \[ \exp(-c p^{-mb}) \leq 1 - \frac{1}{p^{mb}} \leq \exp(- p^{-mb}). \]
    Let $A(T,M; m)$ be the event that for every $p \geq M$ the $p$-th component of $\mathbf{S}^{(m)}$ has not left $\Zp$ up to time $T$:
    \[ A(T,M;m) =  \left\{ \sup_{s \leq T} |p^mS^{(p)}_{\floor{D p^{mb} s}}|_{p, \Adele} \leq p^{-1} \text{ for all $p \geq M$} \right\}. \]
    Let $M \geq p_c$.
    Then with $\lambda = 1$ so that $[\lambda]_p = p^{-1}$, use Lemma \ref{lemma:survival-probability-lemma} to obtain
        \begin{align*}
        \PbmAdele \left( A(T,M; m) \right) & = \prod_{p \geq M} \left( 1 - \frac{1}{p^{mb}} \right)^{\floor{D p^{mb} T}} \\
        & \geq \prod_{p \geq M} \exp \left(-c p^{-mb} D p^{mb} T \right) \\
        & = \prod_{p \geq M} \exp \left(-cT \frac{p^b(p - 1)}{p^{b + 1} - 1} \sigma_p \right) \\
        & = \exp \left( -cT \sum_{p \geq M} \frac{p^b(p - 1)}{p^{b+1} - 1} \sigma_p \right).
    \end{align*}
    Summability of $\sigma_p$ and the limit comparison test imply that $\frac{p^b(p - 1)}{p^{b+1} - 1} \sigma_p$ is summable, and so for each $T$ and $m$,
    \[ \lim_{M \to \infty} \PbmAdele \left( A(T,M; m) \right) = 1. \]
    Continuity from below (in $M$) and above (in $T$) imply $\PbmAdele \left( D(\Adele) \right) = 1$.
\end{proof}

The following proposition handles the special case of a finitely supported diffusion coefficient using the Chentsov criterion as in previous work.
When the diffusion coefficient sequence $\sigma = (\sigma_p)_{p \in \Primes}$ is positive for infinitely many $p$, the adelic absolute value will depend on infinitely many components.
However, the summability of $\sigma$ will ensure that the random walks are small perturbations of the special case, circumventing this issue.

\begin{prop}
\label{prop:finite-support}
    Assume that $\sigma$ is finitely supported, so that there is some $M$ such that $\sigma_p > 0$ implies $p < M$.
    Then the random walks $\mathbf{S}^{(m)}$ converge weakly to an adelic L\'evy process with diffusion coefficient $\sigma$.
\end{prop}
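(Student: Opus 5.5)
When $\sigma$ is finitely supported, the components with $p \geq M$ are trivial. Indeed, for such $p$ we have $D(p,\sigma,b) = 0$, so $\floor{D p^{mb} t} = 0$ for all $t$ and the $p$-th coordinate of $\Sm$ is identically $0$. Likewise, the $p$-adic L\'evy process with $\sigma_p = 0$ has density $f(t,\cdot;p) = \pinvFourier 1 = \delta_0$, so it is also identically $0$. Both $\PbmAdele$ and $\PbAdele$ are therefore supported on the closed set of paths $x = (x_p)$ with $x_p \equiv 0$ for $p \geq M$. This set is homeomorphic to the finite product $D\big(\prod_{p<M}\Qp\big)$, where $\prod_{p<M}\Qp$ carries the metric $\max_{p<M}|x_p - y_p|_p/p$. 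This is the restriction of the adelic metric, which is equivalent to the product metric because only finitely many primes occur. I plan to establish weak convergence in $D\big(\prod_{p<M}\Qp\big)$ with the strong $J_1$ topology and then push forward along the continuous embedding into $D(\Adele)$, extending by zero.

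The plan has two steps: convergence of finite-dimensional distributions, and tightness via Theorem~\ref{BillingsleyConditions}. For the first step, the components of $\Sm$ are independent under $\PbmAdele$, and the limit measure is also a product. It therefore suffices that, for each $p<M$, the finite-dimensional distributions of $\Pbpm$ converge to those of $\Pbp$. This follows from the weak convergence of the $p$-adic walks stated earlier, since it holds at all times; at fixed $t$ the limit has no fixed jumps, being a L\'evy process. A finite product of weakly convergent independent marginals converges weakly to the product.

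For tightness I intend to verify the hypotheses of \cite[Theorem 13.5]{billingsley_convergence_1999} directly in the finite product, as in the $p$-adic case. Using $|x|_{\Adele} = \max_{p<M}|x_p|_p/p \leq \sum_{p<M}|x_p|_p$ and subadditivity of $a\mapsto a^{r}$ for $r\le 1$, fix $r \in (0,\min(b,1))$ and $\alpha = r/b$. For $t_1<t<t_2$, the joint moment $\mathds{E}\big[|\Sm(t)-\Sm(t_1)|_{\Adele}^{r}\,|\Sm(t_2)-\Sm(t)|_{\Adele}^{r}\big]$ is bounded by a finite sum over pairs $(p,q)$, $p,q<M$, of terms $\mathds{E}[|\Delta_1 Y^{(p)}|_p^r|\Delta_2 Y^{(q)}|_q^r]$. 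Each term factors by independence, either across distinct primes or across disjoint increments of the walk at the same prime. Proposition~\ref{prop:moment-estimate}, applied through the stationary increments of the walk, then bounds each factor by $C(\cdot)^{r/b}$. Combined with the floor function, this yields a bound of the form $C\big(F(t_2)-F(t_1)\big)^{2\alpha}$ with $F$ continuous and nondecreasing. Here the floor function is handled, as in \cite{weisbart_p-adic_2024,pierce_brownian_2024}, by noting that the product vanishes unless both increments contain a lattice time.

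The main obstacle is the exponent condition. Theorem 13.5 requires the exponent $2\alpha = 2r/b$ to exceed $1$. When $b$ is small, this fails for $r$ restricted to $(0,1)$. In that case I would instead work componentwise with $r\in(0,b)$ close to $b$, exactly as the cited $p$-adic proofs do. Specifically, I would use the characterization in Theorem~\ref{BillingsleyConditions}: the sup-norm condition and the $w'_T$ condition for the finite product reduce to the corresponding conditions for each of the finitely many factors. This works because $w'_T(x,\delta;\Adele)\ge\lambda$ forces, on a common partition, some $p<M$ with large oscillation. The common partition is obtained by refining one $\delta$-sparse partition per coordinate. A refinement of partitions that are each $\delta$-sparse need not be $\delta$-sparse, however, so I would instead use the $J_1$-tightness of each marginal together with the fact that the limiting coordinates a.s. have no common jump times, being independent and having no fixed discontinuities. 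Tightness of a finite product of tight marginals in the strong topology then follows by a standard argument. Combining tightness with finite-dimensional convergence gives weak convergence to $\PbAdele$.
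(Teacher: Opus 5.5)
Your reduction to the primes $p<M$ and your finite-dimensional argument are fine. The tightness route you make primary (Theorem~13.5 of Billingsley), however, has two problems.

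\emph{The exponent obstacle is misdiagnosed.} With $r<\min(b,1)$ you can reach $2r/b>1$ whenever $b<2$; it fails exactly when $b\ge 2$. So your case split sends the bad case through Theorem~13.5. The restriction $r\le 1$ is also unnecessary. Since the adelic norm is a maximum, $|x|_{\Adele}^r=\max_{p<M}|x_p|_p^r p^{-r}\le\sum_{p<M}|x_p|_p^r$ for every $r>0$. This is how the paper gets $\mathds{E}|\mathbf{Y}_t|_{\Adele}^r\le C_M t^{r/b}$ for all $r\in(0,b)$.

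\emph{The floor-function step fails for the cross terms $p\ne q$.} This is the more serious problem. The coordinates jump on different lattices $\tau_p\Z$ and $\tau_q\Z$, with $\tau_p=(D_pp^{mb})^{-1}$. Let $t$ be a $p$-lattice point, $t_1$ just below it, and $t_2$ the next $q$-lattice point. Then each increment is a single step, and the joint moment is of exact order $p^{-mr}q^{-mr}$. Meanwhile $t_2-t_1$ can be far smaller than either spacing. For example, take $p=2$, $q=3$, $b=1$, $\sigma_2=\sigma_3=1$. Then $k\tau_2-j\tau_3=(3^{m+2}k-2^{m+3}j)/6^{m+1}$, so gaps of size $6^{-(m+1)}$ occur in $[0,12]$. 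A bound $c\,6^{-mr}\le C\,6^{-2\alpha(m+1)}$ is impossible as $m\to\infty$ when $r<1<2\alpha$. Hence no bound $C(t_2-t_1)^{2\alpha}$ with $2\alpha>1$ holds uniformly in $m$. The observation that both increments contain a lattice time forces $t_2-t_1$ past a spacing only when it is the same lattice. The paper's two-line proof also passes from the one-time estimate to the uniform Chentsov criterion without addressing this.

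\emph{Your fallback is the correct argument.} It does not depend on $b$, and it should be the main proof. You need to make the ``standard argument'' explicit, though, because tightness of the marginals alone does not give strong-$J_1$ tightness: jumps in different coordinates can coalesce. The argument runs as follows.
\begin{enumerate}
\item By independence and the $p$-adic theorem, $\PbmAdele$ converges weakly in the product topology on $\prod_{p<M}D(\Qp)$.
\item The limit coordinates are independent and stochastically continuous, so by Fubini they almost surely have no common jump times.
\item Componentwise $J_1$ convergence to a limit whose coordinates have no common jumps implies $J_1$ convergence in $D(\prod_{p<M}\Qp)$ (see e.g.\ Jacod--Shiryaev, Ch.~VI, \S2). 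So the identity map is continuous at almost every limit path.
\item The two topologies share their Borel sets, so the continuous mapping theorem gives strong-$J_1$ weak convergence directly.
\end{enumerate}
Compared with the paper's Chentsov route, this uses nothing beyond the one-prime result. It is also insensitive to the mismatch of time lattices across primes, which a moment bound on adjacent increments cannot absorb.
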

\begin{proof}   

    For each $p < M$ and any $0 < r < b$, by the uniform Chentsov estimate of Proposition \ref{prop:moment-estimate} there is a constant $C_p$ such that
    \[ \mathds{E}_{\Pbpm} |Y^{(p)}_t|_p^r < C_p t^{r/b}. \]
    Then, for some $C_M$,
    \[ \mathds{E}_{\PbmAdele} |\mathbf{Y}_t|^r_{\Adele} \leq M \max_{p < M} \mathds{E}_{\Pbpm} \frac{|Y_t^{(p)}|_p^r}{p^r} < C_M t^{r/b},  \]
    and so the random walk measures $\PbmAdele$ satisfy the uniform Chentsov criterion.
    Convergence of the finite-dimensional distributions to those of $\PbAdele$ along with the uniform Chentsov condition then implies the weak convergence of the random walks to adelic Brownian motion.
    
\end{proof}

For a path 
\[ x(t) = (x_p(t))_{p \in \Primes},\]
write $x|_{p < M}$ to denote the path
\[ x|_{p < M} (t) = (x_p(t) \mathds{1}_{p < M} )_{p \in \Primes}. \]
Similarly, write $\sigma|_{p < M}$ to denote the finitely supported diffusion coefficient
\[ \sigma|_{p < M} = (\sigma_p \mathds{1}_{p < M})_{p \in \Primes}. \]
Let $\Lambda(T,M,\lambda)$ be the event
\[ \Lambda(T, M, \lambda) = \left\{ \forall p \geq M, \sup_{s \leq T} |x_p(s)|_{p, \Adele} < \lambda \right\}. \]
The following lemmas will be used to prove the second condition of Theorem \ref{BillingsleyConditions}.
\begin{lemma}
\label{lemma:dominating-probability}
For any $T, m, \lambda$, and $M$,
    \[ \PbmAdele \left( w_T^\prime(x, \delta; \Adele) \geq \lambda \cap \Lambda(T, M, \lambda) \right) \leq \PbmAdeleM(w_T^\prime(x|_{p < M}, \delta; \Adele) \geq \lambda). \]   
\end{lemma}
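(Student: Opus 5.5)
The plan is a deterministic inclusion of events followed by a distributional identity. First, I will show that on $\Lambda(T,M,\lambda)$ the event $\{w_T^\prime(x,\delta;\Adele)\geq\lambda\}$ forces $\{w_T^\prime(x|_{p<M},\delta;\Adele)\geq\lambda\}$. Second, I will observe that the law of $x|_{p<M}$ under $\PbmAdele$ is exactly $\PbmAdeleM$.

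For the inclusion, take a path $x$ in $\Lambda(T,M,\lambda)$ and any partition in $\Delta(T,\delta)$. For $p\geq M$, the ultrametric inequality for $|\cdot|_p$ gives, for $s,t\leq T$,
\[ |x_p(s)-x_p(t)|_{p,\Adele} \leq \max\bigl(|x_p(s)|_{p,\Adele},|x_p(t)|_{p,\Adele}\bigr) < \lambda. \]
Hence every block term $w(x_p,[t_{i-1},t_i);\Qp)/p$ with $p\geq M$ is at most $\sup_{s\le T}|x_p(s)|_{p,\Adele}$, which is strictly below $\lambda$. I need this as a supremum over $s,t$ to remain strictly below $\lambda$. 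It does, because $|\cdot|_{p,\Adele}$ takes values in the discrete set $\{p^{k}\}\cup\{0\}$, and $p^k<\lambda$ means $p^k\leq[\lambda]_p<\lambda$, exactly as noted before Lemma \ref{lemma:survival-probability-lemma}.

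The subtle point, which I expect to be the main obstacle, is that infinitely many primes $p\geq M$ are involved. Their supremum over $p$ could still approach $\lambda$ even though each term is below $\lambda$. I will handle this by working with the reverse implication instead. Suppose $w_T^\prime(x|_{p<M},\delta;\Adele)<\lambda$. Then some partition $\pi\in\Delta(T,\delta)$ makes the finitely many primes $p<M$ contribute a block maximum $\eta<\lambda$. For this same $\pi$, the full adelic block maximum is
\[ \max\Bigl(\eta,\ \sup_{p\geq M}\max_i \frac{w(x_p,[t_{i-1},t_i);\Qp)}{p}\Bigr). \]
I then need the second entry to be $<\lambda$. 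If the definition of $\Lambda$ only gives $\sup_{s\le T}|x_p(s)|_{p,\Adele}<\lambda$ for each $p$ separately, I expect to either interpret $\Lambda$ so that the supremum over $p\geq M$ is below $\lambda$, or replace $\lambda$ by a slightly smaller level. Alternatively, I can note that for cadlag adelic paths all but finitely many components stay in $\Zp$ on $[0,T]$, where they contribute at most $p^{-1}\to 0$. The remaining finitely many primes each give a value $<\lambda$, so the supremum is a maximum and is $<\lambda$. This gives $w_T^\prime(x,\delta;\Adele)<\lambda$, which is the contrapositive of the inclusion. Since $\PbmAdele(D(\Adele))=1$ by the preceding theorem, restricting to adelic paths costs nothing.

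Finally, I will identify the law of $x|_{p<M}$. Since $\PbmAdele=\prod_p\Pbpm$ is a product measure, the restriction map pushes it forward to $\prod_{p<M}\Pbpm\otimes\prod_{p\geq M}\delta_0$. A walk with $\sigma_p=0$ has $D=0$, so $\floor{Dp^{mb}t}=0$ and the path is identically $0$. Therefore this pushforward is exactly $\PbmAdeleM$. Combining the inclusion with monotonicity of measure gives
\[ \PbmAdele(\{w_T^\prime\geq\lambda\}\cap\Lambda)\leq\PbmAdele(w_T^\prime(x|_{p<M},\delta;\Adele)\geq\lambda)=\PbmAdeleM(w_T^\prime(x|_{p<M},\delta;\Adele)\geq\lambda), \]
which is the claim. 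The last probability can be written either with $x$ or with $x|_{p<M}$, since under $\PbmAdeleM$ these agree almost surely.
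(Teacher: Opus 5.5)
Your proposal is correct and follows the paper's route. It proves a pathwise inclusion $\{w_T^\prime(x,\delta;\Adele)\geq\lambda\}\cap\Lambda(T,M,\lambda)\subseteq\{w_T^\prime(x|_{p<M},\delta;\Adele)\geq\lambda\}$ (you argue it by contrapositive) and then identifies the law of $x|_{p<M}$ under $\PbmAdele$ with $\PbmAdeleM$, which the paper only asserts and you justify via $D=0$ when $\sigma_p=0$.

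The supremum over infinitely many $p\geq M$ that you flag is skipped in the paper, which simply writes $\max_{p}$. Of your three fixes, only the third proves the lemma as stated. It needs uniform-in-$[0,T]$ integrality, which comes from the events $A(T,M;m)$ in the preceding proof rather than from $\PbmAdele(D(\Adele))=1$ alone. The point is actually deterministic on $\Lambda(T,M,\lambda)$: each such term is at most $[\lambda]_p$, and $\sup_p[\lambda]_p<\lambda$, because $[\lambda]_p=1$ for all $p\geq\lambda$ when $\lambda>1$, and $[\lambda]_p=p^{-1}$ for all $p>1/\lambda$ when $\lambda\leq 1$.
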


\begin{proof}
If $w_T^\prime(x, \delta; \Adele) \geq \lambda$, then for every essentially $\delta$-sparse $\{ t_i \}$ there exists an interval $[t_{i - 1}, t_i)$ such that
\[ \max_{p \in \Primes} \frac{w(x_p, [t_{i - 1}, t_i); \Qp)}{p} \geq \lambda. \]
If moreover $\sup_{s \leq T}|x_p(s)|_{p, \Adele} < \lambda$ for all $p \geq M$, then 
\[ \frac{w(x_p, [t_{i - 1}, t_i); \Qp)}{p} < \lambda \]
for all subintervals $[t_{i - 1}, t_i)$ and $p \geq M$.
Hence
    \[ \lambda \leq \max_{p \in \Primes} \frac{w(x_p, [t_{i - 1}, t_i); \Qp)}{p} = \max_{p < M} \frac{w(x_p, [t_{i - 1}, t_i); \Qp)}{p} = w({x|_{p < M}}, [t_{i - 1}, t_i); \Adele), \] 
and so, together,  
\[ w_T^\prime(x, \delta; \Adele) \geq \lambda \quad \text{ and } \quad \sup_{s \leq T} |x_p(s)|_{p, \Adele} < \lambda \text{ for all $p \geq M$}\]
implies 
\[ w_T^\prime(x|_{p < M}, \delta; \Adele) \geq \lambda. \]
The result then follows from the equality
\[ \PbmAdele( w_T^\prime(x|_{p < M}, \delta; \Adele) \geq \lambda)  =  \PbmAdeleM( w_T^\prime(x|_{p < M}, \delta; \Adele) \geq \lambda). \]
\end{proof}

\begin{lemma}
\label{lemma:small-tail-probability}
    For any $T, \lambda$, and $\epsilon$, there is an $M = M(T,\lambda, \epsilon)$ such that
    \[ \liminf_m \PbmAdele \left( \Lambda(T, M, \lambda) \right) > 1 - \epsilon . \] 
\end{lemma}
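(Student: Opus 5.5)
Since $\PbmAdele$ is a product measure over primes, I will write $\PbmAdele(\Lambda(T,M,\lambda))$ as a product over $p \geq M$ of one-dimensional survival probabilities. Each factor is given exactly by Lemma \ref{lemma:survival-probability-lemma}. I will then bound the product from below uniformly in $m$ by the same exponential comparison used in the proof that $\mathbf{S}^{(m)}$ is almost surely adelic. The tail of the summable series $\sum_p \sigma_p$ then makes the bound close to $1$ for $M$ large.

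The first step is to check the hypothesis $1 \leq m + \ceil{\log_p \lambda}$ of Lemma \ref{lemma:survival-probability-lemma}. For $\lambda > 1$ this holds for all $m \geq 0$ and all $p$. For $\lambda \leq 1$, the event $\Lambda(T,M,\lambda)$ shrinks as $\lambda$ decreases, so it suffices to handle small $\lambda$. I would enlarge $M$ so that $p > 1/\lambda$ for all $p \geq M$. Then $\ceil{\log_p \lambda} \geq 0$ fails only when $\ceil{\log_p\lambda} = 0$ and $m = 0$, and since the statement is a $\liminf$ in $m$, I may restrict to $m \geq 1$. For such $p$ we have $[\lambda]_p = p^{-1}$, so the lemma gives exactly the factor $(1 - p^{-mb})^{\floor{D p^{mb} T}}$, as in the adelic theorem. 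For general $\lambda$, $p[\lambda]_p \geq \lambda$ up to the factor noted earlier, namely $(p[\lambda]_p)^{-b} \leq \min(1, p^b\lambda^{-b})$ suitably. The cleanest route is to observe that $p[\lambda]_p \geq 1$ whenever $p > 1/\lambda$, since $p^k < \lambda$ then forces $[\lambda]_p \geq p^{-1}$. Hence each factor is at least $(1 - p^{-mb})^{\floor{D p^{mb}T}}$.

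Next, I fix $c > 1$ and use $1 - u \geq e^{-cu}$ for $u$ small. Here $u = p^{-mb} \leq p^{-b}$ for $m \geq 1$, so the inequality holds uniformly in $m \geq 1$ once $p \geq p_c$, with $p_c$ independent of $m$. This gives
\[ \PbmAdele(\Lambda(T,M,\lambda)) \geq \exp\Big(-cT \sum_{p \geq M} \tfrac{p^b(p-1)}{p^{b+1}-1}\sigma_p\Big), \]
a bound independent of $m$. The coefficient $\tfrac{p^b(p-1)}{p^{b+1}-1}$ is at most $1$, so the series is dominated by $\sum_{p \geq M}\sigma_p$, which tends to $0$ as $M \to \infty$. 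I would then choose $M \geq \max(p_c, \lambda^{-1}+1)$ large enough that the right side exceeds $1 - \epsilon/2$. That gives $\liminf_m > 1 - \epsilon$.

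The main obstacle is the uniformity in $m$: the exponential comparison must hold for all $m$ simultaneously, not only for fixed $m$ as in the earlier theorem. The key observation is that $p^{-mb} \leq p^{-b}$, which lets a single threshold $p_c$ work for all $m \geq 1$. The edge case $m = 0$ is irrelevant for the $\liminf$. The other minor point is the bookkeeping with $[\lambda]_p$ when $\lambda < 1$, handled above by taking $M > 1/\lambda$.
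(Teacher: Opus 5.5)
Your proof is correct, and its key analytic step differs from the paper's.

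The paper computes the limit exactly. It passes $m\to\infty$ inside the infinite product, invoking a dominated convergence theorem for products, and obtains $\prod_{p\ge M}\exp\bigl(-T[\lambda]_p^{-b}\tfrac{p-1}{p^{b+1}-1}\sigma_p\bigr)$. Only then does it use $[\lambda]_p^{-b}\le p^b\lambda^{-b}$ to reach $\exp\bigl(-T\lambda^{-b}\sum_{p\ge M}\tfrac{p^b(p-1)}{p^{b+1}-1}\sigma_p\bigr)$.

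You never take the limit in $m$. Choosing $M>1/\lambda$ gives $(p[\lambda]_p)^{-b}\le 1$. The observation $p^{-mb}\le p^{-b}$ then makes the comparison $1-u\ge e^{-cu}$ from the adelic-valuedness theorem hold with one threshold $p_c$ for all $m\ge 1$. The result is a lower bound on $\inf_{m\ge 1}\PbmAdele(\Lambda(T,M,\lambda))$, which is stronger than the stated $\liminf$.

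What each route buys:
\begin{itemize}
\item Yours avoids the limit--product interchange, which the paper invokes without spelling out a dominating bound. It also explicitly checks the hypothesis $1\le m+\lceil\log_p\lambda\rceil$ of Lemma \ref{lemma:survival-probability-lemma}, which the paper leaves implicit.
\item The paper's keeps the explicit factor $\lambda^{-b}$ in the final bound. That is exactly what is reused in Proposition \ref{prop:prove-billingsley-1} as $\lambda\to\infty$. Your $\lambda$-free bound would not serve there as written. Your method does adapt if you keep $u=(p[\lambda]_p)^{-b}p^{-mb}\le\lambda^{-b}p^{-mb}$, which is uniformly small for large $\lambda$.
\end{itemize}

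Two cosmetic points:
\begin{itemize}
\item The monotonicity reduction to small $\lambda$ is unnecessary once you take $M>1/\lambda$.
\item The aside $(p[\lambda]_p)^{-b}\le\min(1,p^b\lambda^{-b})$ is false for small $\lambda$. For example, $\lambda=p^{-3}$ gives $(p[\lambda]_p)^{-b}=p^{3b}$. The correct general bound is $(p[\lambda]_p)^{-b}\le\lambda^{-b}$. Since you don't use the aside, it should simply be deleted.
\end{itemize}
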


\begin{proof}
    For any $T, \lambda, \epsilon$, and $M$,
    \[ \liminf_m \PbmAdele \left( \Lambda(T, M, \lambda) \right) = \lim_m \prod_{p \geq M} \left( 1 - \frac{(p [\lambda]_p)^{-b}}{p^{mb}} \right)^{\floor{D p^{mb} T}}. \]
    Apply the dominated convergence theorem for products to interchange the limits and obtain
    \begin{align*}
    \liminf_m \PbmAdele \left( \Lambda(T, M, \lambda) \right)  &= \prod_{p \geq M} \lim_m \left( 1 - \frac{(p [\lambda]_p)^{-b}}{p^{mb}} \right)^{\floor{D p^{mb}T}} \\
        & = \prod_{p \geq M} \exp \left( -T [\lambda]_p^{-b} \frac{p - 1}{p^{b + 1} - 1} \sigma_p \right). 
    \end{align*}
    Since
    \[ -p^b \lambda^{-b} \leq - [\lambda]_p^{-b} < - \lambda^{-b}, \]
    obtain
    \begin{align*}
    \liminf_m \PbmAdele \left( \Lambda(T, M, \lambda) \right) & \geq \prod_{p \geq M} \exp \left( -T \lambda^{-b} \frac{p^b(p - 1)}{p^{b+1} - 1} \sigma_p \right) \\
    & = \exp \left( -T \lambda^{-b} \sum_{p \geq M} \frac{p^b(p - 1)}{p^{b+1} - 1} \sigma_p \right).
    \end{align*}
    Since the sum is finite, 
    \[ \lim_{M \to \infty} \exp \left( -T \lambda^{-b} \sum_{p \geq M} \frac{p^b(p - 1)}{p^{b+1} - 1} \sigma_p \right) = 1, \]
    so there is then a sufficiently large $M$ such that 
    \[ \liminf_m \PbmAdele \left( \Lambda(T, M, \lambda) \right) > 1 - \epsilon . \] 
\end{proof}

The following propositions prove that the tightness conditions of Theorem \ref{BillingsleyConditions} hold.
\begin{prop}
\label{prop:prove-billingsley-2}
    For each $T$ and $\lambda$, 
    \[ \lim_{\delta \to 0} \limsup_m \PbmAdele (w_T^\prime(x, \delta; \Adele) \geq \lambda) = 0. \]
\end{prop}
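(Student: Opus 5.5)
Fix $T$, $\lambda$, and $\epsilon > 0$. The plan is to split the event $\{w_T^\prime(x,\delta;\Adele) \geq \lambda\}$ according to whether the tail event $\Lambda(T,M,\lambda)$ occurs. For any $M$,
\[ \PbmAdele(w_T^\prime(x,\delta;\Adele) \geq \lambda) \leq \PbmAdele\left(w_T^\prime(x,\delta;\Adele) \geq \lambda \cap \Lambda(T,M,\lambda)\right) + 1 - \PbmAdele(\Lambda(T,M,\lambda)). \]
We choose $M = M(T,\lambda,\epsilon)$ as in Lemma \ref{lemma:small-tail-probability}. Then $\liminf_m \PbmAdele(\Lambda(T,M,\lambda)) > 1-\epsilon$, so after taking $\limsup_m$ the second term contributes at most $\epsilon$, uniformly in $\delta$.

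For the first term we apply Lemma \ref{lemma:dominating-probability}, which bounds it by $\PbmAdeleM(w_T^\prime(x|_{p<M},\delta;\Adele) \geq \lambda)$. This is a statement about the random walks with the finitely supported coefficient $\sigma|_{p<M}$. Under $\PbmAdeleM$ the components with $p \geq M$ have $\sigma_p = 0$, so $D = 0$ and those components are identically zero. Hence $x|_{p<M} = x$ almost surely, and the quantity is just $\PbmAdeleM(w_T^\prime(x,\delta;\Adele)\geq\lambda)$.

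By Proposition \ref{prop:finite-support}, the measures $\PbmAdeleM$ converge weakly in the strong $J_1$ topology to the adelic L\'evy process with coefficient $\sigma|_{p<M}$. Since $D(\Adele)$ is Polish, Prokhorov's theorem says a weakly convergent sequence is tight. The necessity direction of Theorem \ref{BillingsleyConditions} then gives
\[ \lim_{\delta\to 0}\limsup_m \PbmAdeleM(w_T^\prime(x,\delta;\Adele)\geq\lambda) = 0. \]
Alternatively, one can use directly the uniform Chentsov bound $C_M t^{r/b}$ from the proof of Proposition \ref{prop:finite-support}, which yields this modulus condition via \cite[Theorem 13.5]{billingsley_convergence_1999}.

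Combining the two pieces gives $\lim_{\delta\to0}\limsup_m \PbmAdele(w_T^\prime \geq \lambda) \leq \epsilon$, and $\epsilon$ is arbitrary. The main point requiring care is the order of choices: $M$ must be fixed from $T,\lambda,\epsilon$ before sending $m\to\infty$ and then $\delta\to0$. It is also worth checking that $\limsup$ of a sum is at most the sum of the $\limsup$s, and that $1-\liminf_m$ of the tail probability equals $\limsup_m$ of its complement, so that the $\epsilon$ bound survives both limits. The other point to verify is that Lemma \ref{lemma:dominating-probability} is being applied with the same $\lambda$ in both the modulus threshold and the tail event, which is exactly how it is stated.
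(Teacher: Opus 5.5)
Your proposal is correct and follows essentially the same route as the paper: you split on $\Lambda(T,M,\lambda)$, choose $M$ via Lemma \ref{lemma:small-tail-probability}, bound the main term with Lemma \ref{lemma:dominating-probability}, and then use the tightness of the weakly convergent, finitely supported walks from Proposition \ref{prop:finite-support} together with the necessity direction of Theorem \ref{BillingsleyConditions}. You also note that the components with $p \geq M$ vanish identically under $\PbmAdeleM$, so $x|_{p<M} = x$ almost surely there; this makes explicit a step the paper uses without stating it.
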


\begin{proof}

Use monotonicity and the law of total probability with the events $\Lambda(T, M, \lambda)$ and its complement to obtain
\begin{align*}
    \PbmAdele (w_T^\prime(x, \delta; \Adele) \geq \lambda) 
        & \leq \PbmAdele( w_T^\prime(x, \delta; \Adele) \geq \lambda \cap \Lambda(T,M,\lambda)) \\
         & \qquad + \left( 1 - \PbmAdele(\Lambda(T,M,\lambda)) \right).
\end{align*}
By Lemma \ref{lemma:dominating-probability},
\[ \PbmAdele( w_T^\prime(x, \delta; \Adele) \geq \lambda \cap \Lambda(T,M,\lambda)) \leq \PbmAdeleM(w_T^\prime(x|_{p < M}, \delta; \Adele) \geq \lambda).   \]
By Proposition \ref{prop:finite-support}, the measures $\PbmAdeleM$ converge weakly, and hence are tight.
Then by the conditions of Theorem \ref{BillingsleyConditions}, 
\[ \lim_{\delta \to 0} \limsup_m \PbmAdeleM(w_T^\prime(x|_{p < M}, \delta; \Adele) \geq \lambda) = 0.   \]
By Lemma \ref{lemma:small-tail-probability}, for any $\epsilon$ there is an $M$ large enough so that 
\[  \limsup_m \left( 1 - \PbmAdele(\Lambda(T,M,\lambda)) \right) < \epsilon.  \]
Hence
\[ \lim_{\delta \to 0} \limsup_m \PbmAdele(w_T^\prime(x, \delta; \Adele) \geq \lambda) < \epsilon \]
for any $\epsilon > 0$.
\end{proof}

\begin{prop}
\label{prop:prove-billingsley-1}
For any $T$,
    \[ \lim_{\lambda \to \infty} \liminf_m \PbmAdele ( \sup_{s \leq T} |x(s)|_{\Adele} < \lambda) = 1. \]
\end{prop}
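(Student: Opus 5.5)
Since the components of $\Sm$ are independent under $\PbmAdele$, and $\sup_{s\le T}|x(s)|_{\Adele} = \max_{p\in\Primes}\sup_{s\le T}|x_p(s)|_{p,\Adele}$, the event $\{\sup_{s \le T}|x(s)|_{\Adele} < \lambda\}$ factors as the intersection over all primes $p$ of the componentwise events $\{\sup_{s\le T}|x_p(s)|_{p,\Adele}<\lambda\}$. The plan is to split the primes at a cutoff $M$, exactly as in Proposition \ref{prop:prove-billingsley-2}:
\[
\PbmAdele\Big(\sup_{s\le T}|x(s)|_{\Adele}<\lambda\Big) = \PbmAdele\big(\Lambda(T,M,\lambda)\big)\cdot \prod_{p<M}\Pbpm\Big(\sup_{s\le T}|x_p(s)|_{p,\Adele}<\lambda\Big).
\]
The tail factor is handled by Lemma \ref{lemma:small-tail-probability}. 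The finitely many head factors are handled either by Lemma \ref{lemma:survival-probability-lemma} directly or by tightness of the finitely supported walks from Proposition \ref{prop:finite-support}.

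\textbf{Step 1.} Take $\lambda \ge 1$ and fix $\epsilon>0$. For every $p$ and $m\ge 0$ we have $m+\ceil{\log_p\lambda}\ge 1$ once $\lambda>1$, so Lemma \ref{lemma:survival-probability-lemma} applies to each factor. The hard part is getting an $M$ that works uniformly in $\lambda$. Since $\Lambda(T,M,\lambda)$ increases in $\lambda$, the $M=M(T,1,\epsilon)$ supplied by Lemma \ref{lemma:small-tail-probability} for $\lambda=1$ gives $\liminf_m \PbmAdele(\Lambda(T,M,\lambda))>1-\epsilon$ for all $\lambda\ge 1$ simultaneously. Alternatively, the explicit bound $\exp\!\big(-T\lambda^{-b}\sum_{p\ge M}\frac{p^b(p-1)}{p^{b+1}-1}\sigma_p\big)$ from the proof of that lemma is monotone in $\lambda$ and tends to $1$.

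\textbf{Step 2.} For each fixed $p<M$, compute the limit of the head factor. Using the limit computed in the proof of Lemma \ref{lemma:small-tail-probability} (the standard $(1-a/N)^{N}\to e^{-a}$ argument applied to $\floor{Dp^{mb}T}$),
\[
\lim_m \Pbpm\Big(\sup_{s\le T}|x_p(s)|_{p,\Adele}<\lambda\Big) = \exp\Big(-T[\lambda]_p^{-b}\tfrac{p-1}{p^{b+1}-1}\sigma_p\Big)\ \ge\ \exp\Big(-T\lambda^{-b}\tfrac{p^b(p-1)}{p^{b+1}-1}\sigma_p\Big).
\]
This lower bound tends to $1$ as $\lambda\to\infty$. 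Because there are only finitely many $p<M$, the $\liminf_m$ of the product is at least the product of the limits. Combining this with Step 1 gives
\[
\liminf_m\PbmAdele\Big(\sup_{s\le T}|x(s)|_{\Adele}<\lambda\Big)\ \ge\ (1-\epsilon)\exp\Big(-T\lambda^{-b}\textstyle\sum_{p<M}\frac{p^b(p-1)}{p^{b+1}-1}\sigma_p\Big),
\]
and the right-hand side tends to $1-\epsilon$ as $\lambda\to\infty$. Since $\epsilon$ is arbitrary, the proposition follows.

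In fact the cleanest route skips the split altogether. Applying the same limit and the dominated convergence for products used in Lemma \ref{lemma:small-tail-probability} with $M=2$ gives the full product bound $\exp\!\big(-T\lambda^{-b}\sum_{p}\frac{p^b(p-1)}{p^{b+1}-1}\sigma_p\big)\to 1$. I would present that version, with the split above as a fallback.

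\textbf{Main obstacle.} The delicate point is the interchange of $\lim_m$ with the infinite product over $p$. Each factor depends on $m$ through both the exponent $\floor{Dp^{mb}T}$ and the base. I would justify the interchange the same way as in Lemma \ref{lemma:small-tail-probability}, via a uniform-in-$m$ bound: using $\log(1-u)\ge -cu$ for small $u$, each factor is at least $\exp\!\big(-cT\lambda^{-b}\tfrac{p^b(p-1)}{p^{b+1}-1}\sigma_p\big)$ for all $m$ and all large $p$. This bound is summable in the exponent, so the interchange is dominated. Even without the interchange, this bound alone already gives the liminf lower bound directly.
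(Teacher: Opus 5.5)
Your proposal is correct, and the version you say you would present is exactly the paper's proof: bound the full product over all primes by the limits $\exp\bigl(-(p[\lambda]_p)^{-b}\tfrac{p^b(p-1)}{p^{b+1}-1}\sigma_p T\bigr)$, use $(p[\lambda]_p)^{-b}\le\lambda^{-b}$ and summability of $\sigma$, then let $\lambda\to\infty$; the head/tail split at $M$ is a valid but unnecessary detour. Your explicit factorwise lower bound $\exp\bigl(-cT\lambda^{-b}\tfrac{p^b(p-1)}{p^{b+1}-1}\sigma_p\bigr)$, uniform in $m$, supplies the domination that the paper leaves implicit in its appeal to dominated convergence for products, and, as you note, it already yields the needed bound on the $\liminf_m$ without interchanging limits.
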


\begin{proof}
    Since
    \[ -p^b \lambda^{-b} \leq - [\lambda]_p^{-b} < - \lambda^{-b}, \]
    apply the dominated convergence theorem for products to obtain
    \begin{align*}
         \liminf_m \prod_{p \in \Primes} \left( 1 - \frac{(p [\lambda]_p)^{-b}}{p^{mb}} \right)^{\floor{D p^{mb} T}} & = \prod_p \exp \left( - (p[\lambda]_p)^{-b} \frac{p^b(p - 1)}{p^{b+1} - 1}\sigma_pT \right) \\
         & \geq \prod_{p \in \Primes} \exp \left( - \lambda^{-b} \frac{p^b(p - 1)}{p^{b + 1} - 1} \sigma_p T \right)  \\
         & = \exp \left( - \lambda^{-b} T \sum_{p \in \Primes} \frac{p^b(p -1)}{p^{b + 1} - 1} \sigma_p \right).
    \end{align*}
    Since the sum is finite, take the limit as $\lambda$ goes to infinity to conclude 
    \[ \lim_{\lambda \to \infty} \liminf_m \PbmAdele ( \sup_{s \leq T} |x(s)|_{\Adele} < \lambda) = 1. \]
\end{proof}

\begin{thm}
The random walks $\mathbf{S}^{(m)}$ converge weakly to an adelic L\'evy process with diffusion coefficient $\sigma$ and diffusion exponent $b$.
\end{thm}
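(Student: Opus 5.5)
The proof follows the standard route: tightness in the strong $J_1$ topology together with convergence of finite-dimensional distributions. By Theorem \ref{BillingsleyConditions}, the sequence $(\PbmAdele)_m$ on $D(\Adele)$, with the metric induced by $|\cdot|_{\Adele}$, is tight as soon as its two conditions hold. Proposition \ref{prop:prove-billingsley-1} gives the first condition: since $\limsup_m \PbmAdele(\sup_{s\le T}|x(s)|_{\Adele}\ge\lambda) = 1 - \liminf_m \PbmAdele(\sup_{s \leq T}|x(s)|_{\Adele}<\lambda)$, the limit in $\lambda$ is zero. Proposition \ref{prop:prove-billingsley-2} gives the second condition directly. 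These measures live on $D(\Adele)$ because, by the almost-sure adelic theorem above, each $\PbmAdele$ gives full measure to $D(\Adele)$. So the family is tight, and by Prokhorov's theorem (valid because $D(\Adele)$ is Polish) it is relatively compact.

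Next I would identify every subsequential limit as $\PbAdele$. For that I need convergence of the finite-dimensional distributions of $\PbmAdele$ to those of $\PbAdele$. At a fixed time vector $(t_1,\dots,t_k)$, the components over different primes are independent under both measures, because each is a product measure. For each $p$, the $p$-adic fdd convergence holds by the $p$-adic scaling limit theorem. Hence the fdds converge in $\prod_p \Qp^k$ with the product topology, since weak convergence of product measures on countable products follows from convergence of finite-dimensional cylinder marginals. I then need to upgrade this to convergence in $\Adele^k$ with the adelic topology. For this I would reuse the tail estimate of Lemma \ref{lemma:small-tail-probability} with $\lambda = 1$, i.e. $[\lambda]_p = p^{-1}$, together with the theorem showing adelic values. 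Uniformly in $m$, with probability at least $1-\epsilon$, all components with $p\ge M$ lie in $\Zp$ up to time $T$; the limit measure has the same property by Weisbart's result. On the sets $\prod_{p<M}\Qp\times\prod_{p\ge M}\Zp$, the adelic topology coincides with the product topology, so a standard approximation (test bounded continuous functions restricted to these open-compact-tail sets) yields fdd convergence in the adelic sense. Actually, to avoid this step entirely, I would note that the cylinder fdd laws determine the measure on the Borel $\sigma$-algebra, which coincides for the two topologies.

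Finally I conclude. Let $Q$ be any subsequential weak limit. Skorokhod-space fdd projections $\pi_{t_1,\dots,t_k}$ are continuous at $x$ whenever $t_i$ is not a jump time of $x$, and the set of fixed-time discontinuities of $Q$ is at most countable. So for $t_i$ in a dense set $T_Q$ containing $0$, the fdds of $Q$ agree with those of $\PbAdele$. By Billingsley's Theorem 13.1 / 16.6, fdds on a dense set determine the measure, so $Q = \PbAdele$. Since every subsequence has a further subsequence converging to $\PbAdele$, the whole sequence converges weakly. I expect the main obstacle to be making the fdd convergence rigorous in the adelic topology rather than in the product topology. The uniform tail bound from Lemma \ref{lemma:small-tail-probability} is the tool I would try first there: it shows that the adelic and product topologies agree on sets carrying mass at least $1-\epsilon$, uniformly in $m$.
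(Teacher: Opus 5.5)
Your proof is the paper's proof. You get tightness from Propositions \ref{prop:prove-billingsley-1} and \ref{prop:prove-billingsley-2} via Theorem \ref{BillingsleyConditions} and then use convergence of finite-dimensional distributions, and your spelled-out fdd step correctly fills in what the paper states in one line: the product structure, the equality of the two Borel $\sigma$-algebras, and identifying subsequential limits at times that are not fixed jump times.

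One caveat you inherit from the paper: bounded sets in $(\Adele,|\cdot|_{\Adele})$ need not be relatively compact. For example, the elements $e_p$ with $p$-coordinate $p^{-1}$ and all other coordinates $0$ satisfy $|e_p|_{\Adele}=1$ and $|e_p-e_q|_{\Adele}=1$ for $p\neq q$. So condition 1 of Theorem \ref{BillingsleyConditions} does not give compact containment on $\Adele$. The $\lambda=1$ case of Lemma \ref{lemma:small-tail-probability}, which you proposed to bypass, supplies it: all coordinates with $p\ge M$ stay in $\Zp$ up to time $T$. Together with Proposition \ref{prop:prove-billingsley-1} for the finitely many $p<M$, this confines the paths to a compact set, so that estimate belongs in the tightness step rather than being dropped.
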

\begin{proof}
    Propositions \ref{prop:prove-billingsley-2} and \ref{prop:prove-billingsley-1} prove that the measures $\PbmAdele$ corresponding to the adelic random walks $\mathbf{S}^{(m)}$ satisfy the conditions of Theorem \ref{BillingsleyConditions}, and therefore are tight.
    Convergence of the finite-dimensional distributions then implies weak convergence.
\end{proof}

\printbibliography

@article{avetisov_p-adic_2002,
	title = {p-{Adic} {Models} of {Ultrametric} {Diffusion} {Constrained} by {Hierarchical} {Energy} {Landscapes}},
	volume = {35},
	issn = {0305-4470, 1361-6447},
	url = {http://arxiv.org/abs/cond-mat/0106506},
	doi = {10.1088/0305-4470/35/2/301},
	language = {en},
	number = {2},
	urldate = {2023-11-06},
	journal = {Journal of Physics A: Mathematical and General},
	author = {Avetisov, V. A. and Bikulov, A. H. and Kozyrev, S. V. and Osipov, V. A.},
	month = jan,
	year = {2002},
	note = {arXiv:cond-mat/0106506},
	pages = {177--189},
}

@book{khrennikov_ultrametric_2018,
	edition = {1},
	title = {Ultrametric {Pseudodifferential} {Equations} and {Applications}:},
	isbn = {978-1-107-18882-2 978-1-316-98670-7},
	shorttitle = {Ultrametric {Pseudodifferential} {Equations} and {Applications}},
	url = {https://www.cambridge.org/core/product/identifier/9781316986707/type/book},
	doi = {10.1017/9781316986707},
	urldate = {2023-08-07},
	publisher = {Cambridge University Press},
	author = {Khrennikov, Andrei Yu. and Kozyrev, Sergei V. and Zúñiga-Galindo, W. A.},
	month = apr,
	year = {2018},
}

@article{dragovich_p-adic_2017,
	title = {\$p\$-{Adic} {Mathematical} {Physics}: {The} {First} 30 {Years}},
	volume = {9},
	issn = {2070-0466, 2070-0474},
	shorttitle = {\$p\$-{Adic} {Mathematical} {Physics}},
	url = {http://arxiv.org/abs/1705.04758},
	doi = {10.1134/S2070046617020017},
	language = {en},
	number = {2},
	urldate = {2023-03-04},
	journal = {p-Adic Numbers, Ultrametric Analysis and Applications},
	author = {Dragovich, B. and Khrennikov, A. Yu and Kozyrev, S. V. and Volovich, I. V. and Zelenov, E. I.},
	month = apr,
	year = {2017},
	note = {arXiv:1705.04758 [hep-th, physics:math-ph, q-bio]},
	pages = {87--121},
}

@misc{hutchcroft_critical_2025,
	title = {Critical long-range percolation {III}: {The} upper critical dimension},
	shorttitle = {Critical long-range percolation {III}},
	url = {http://arxiv.org/abs/2508.18809},
	doi = {10.48550/arXiv.2508.18809},
	urldate = {2025-09-13},
	publisher = {arXiv},
	author = {Hutchcroft, Tom},
	month = aug,
	year = {2025},
	note = {arXiv:2508.18809 [math]},
}

@misc{hutchcroft_critical_2025-1,
	title = {Critical long-range percolation {I}: {High} effective dimension},
	shorttitle = {Critical long-range percolation {I}},
	url = {http://arxiv.org/abs/2508.18807},
	doi = {10.48550/arXiv.2508.18807},
	urldate = {2025-09-13},
	publisher = {arXiv},
	author = {Hutchcroft, Tom},
	month = aug,
	year = {2025},
	note = {arXiv:2508.18807 [math]},
}

@article{bradley_local_2025,
	title = {On the {Local} {Ultrametricity} of {Finite} {Metric} {Data}},
	issn = {1432-1343},
	url = {https://doi.org/10.1007/s00357-025-09508-3},
	doi = {10.1007/s00357-025-09508-3},
	language = {en},
	urldate = {2025-09-12},
	journal = {Journal of Classification},
	author = {Bradley, Patrick Erik},
	month = mar,
	year = {2025},
}

@article{zuniga-galindo_hierarchical_2024,
	title = {Hierarchical {Neural} {Networks}, p-{Adic} {PDEs}, and {Applications} to {Image} {Processing}},
	volume = {31},
	issn = {1776-0852},
	url = {https://doi.org/10.1007/s44198-024-00229-6},
	doi = {10.1007/s44198-024-00229-6},
	language = {en},
	number = {1},
	urldate = {2025-09-12},
	journal = {Journal of Nonlinear Mathematical Physics},
	author = {Zúñiga-Galindo, W. A. and Zambrano-Luna, B. A. and Dibba, Baboucarr},
	month = sep,
	year = {2024},
	pages = {63},
}

@article{panchenko_parisi_2013,
	title = {The {Parisi} ultrametricity conjecture},
	volume = {177},
	url = {https://annals.math.princeton.edu/2013/177-1/p08},
	language = {en-US},
	urldate = {2025-09-12},
	journal = {Annals of Mathematics},
	author = {Panchenko, Dmitry},
	year = {2013},
	pages = {383--393},
}

@misc{kochubei_linear_2017,
	title = {Linear and {Nonlinear} {Heat} {Equations} on a p-{Adic} {Ball}},
	url = {http://arxiv.org/abs/1708.03261},
	doi = {10.48550/arXiv.1708.03261},
	language = {en},
	urldate = {2025-09-12},
	publisher = {arXiv},
	author = {Kochubei, Anatoly N.},
	month = aug,
	year = {2017},
	note = {arXiv:1708.03261 [math]},
}

@misc{khrennikov_ultrametric_2020,
	title = {Ultrametric model for covid-19 dynamics: an attempt to explain slow approaching herd immunity in {Sweden}},
	copyright = {© 2020, Posted by Cold Spring Harbor Laboratory. This pre-print is available under a Creative Commons License (Attribution-NoDerivs 4.0 International), CC BY-ND 4.0, as described at http://creativecommons.org/licenses/by-nd/4.0/},
	shorttitle = {Ultrametric model for covid-19 dynamics},
	url = {https://www.medrxiv.org/content/10.1101/2020.07.04.20146209v1},
	doi = {10.1101/2020.07.04.20146209},
	language = {en},
	urldate = {2025-09-12},
	publisher = {medRxiv},
	author = {Khrennikov, Andrei},
	month = jul,
	year = {2020},
	note = {Pages: 2020.07.04.20146209},
}

@misc{abdesselam_towards_2018,
	title = {Towards three-dimensional conformal probability},
	url = {http://arxiv.org/abs/1511.03180},
	doi = {10.1134/S207004661804001},
	urldate = {2025-09-10},
	author = {Abdesselam, Abdelmalek},
	month = sep,
	year = {2018},
	note = {arXiv:1511.03180 [math]},
}

@misc{hutchcroft_critical_2025-2,
	title = {Critical long-range percolation {II}: {Low} effective dimension},
	shorttitle = {Critical long-range percolation {II}},
	url = {http://arxiv.org/abs/2508.18808},
	doi = {10.48550/arXiv.2508.18808},
	language = {en},
	urldate = {2025-09-10},
	publisher = {arXiv},
	author = {Hutchcroft, Tom},
	month = aug,
	year = {2025},
	note = {arXiv:2508.18808 [math]},
}

@book{taibleson_fourier_1975,
	address = {Princeton, N.J},
	series = {Mathematical notes ; no. 15},
	title = {Fourier analysis on local fields},
	isbn = {978-0-691-08165-6},
	publisher = {Princeton University Press},
	author = {Taibleson, M. H.},
	year = {1975},
}

@article{khrennikov_application_2016,
	title = {Application of p-{Adic} {Wavelets} to {Model} {Reaction}–{Diffusion} {Dynamics} in {Random} {Porous} {Media}},
	volume = {22},
	issn = {1069-5869, 1531-5851},
	url = {http://link.springer.com/10.1007/s00041-015-9433-y},
	doi = {10.1007/s00041-015-9433-y},
	language = {en},
	number = {4},
	urldate = {2025-06-10},
	journal = {Journal of Fourier Analysis and Applications},
	author = {Khrennikov, Andrei and Oleschko, Klaudia and De Jesús Correa López, Maria},
	month = aug,
	year = {2016},
	pages = {809--822},
}

@book{gouvea_p-adic_2003,
	address = {Berlin ; New York},
	edition = {2nd ed},
	series = {Universitext},
	title = {p-adic numbers: an introduction},
	isbn = {978-3-540-62911-5},
	shorttitle = {p-adic numbers},
	publisher = {Springer},
	author = {Gouvêa, Fernando Q.},
	year = {2003},
}

@article{volov_toward_2020,
	title = {Toward {Ultrametric} {Modeling} of the {Epidemic} {Spread}},
	volume = {12},
	issn = {2070-0474},
	url = {https://doi.org/10.1134/S2070046620030061},
	doi = {10.1134/S2070046620030061},
	language = {en},
	number = {3},
	urldate = {2025-02-12},
	journal = {p-Adic Numbers, Ultrametric Analysis and Applications},
	author = {Volov, V. T. and Zubarev, A. P.},
	month = jul,
	year = {2020},
	pages = {247--258},
}

@article{albeverio_random_1994,
	title = {A random walk on p-adics—the generator and its spectrum},
	volume = {53},
	copyright = {https://www.elsevier.com/tdm/userlicense/1.0/},
	issn = {03044149},
	url = {https://linkinghub.elsevier.com/retrieve/pii/030441499490054X},
	doi = {10.1016/0304-4149(94)90054-X},
	language = {en},
	number = {1},
	urldate = {2024-10-28},
	journal = {Stochastic Processes and their Applications},
	author = {Albeverio, Sergio and Karwowski, Witold},
	month = sep,
	year = {1994},
	pages = {1--22},
}

@article{varadarajan_path_1997,
	title = {Path {Integrals} for a {Class} of {P}-{Adic} {Schrödinger} {Equations}},
	volume = {39},
	issn = {1573-0530},
	url = {https://doi.org/10.1023/A:1007364631796},
	doi = {10.1023/A:1007364631796},
	language = {en},
	number = {2},
	journal = {Letters in Mathematical Physics},
	author = {Varadarajan, V. S.},
	month = jan,
	year = {1997},
	pages = {97--106},
}

@article{pierce_brownian_2024,
	title = {Brownian motion in a vector space over a local field is a scaling limit},
	volume = {42},
	issn = {0723-0869},
	url = {https://www.sciencedirect.com/science/article/pii/S0723086924000744},
	doi = {10.1016/j.exmath.2024.125607},
	number = {6},
	journal = {Expositiones Mathematicae},
	author = {Pierce, Tyler and Rajkumar, Rahul and Stine, Andrea and Weisbart, David and Yassine, Adam M.},
	month = dec,
	year = {2024},
	pages = {125607},
}

@article{weisbart_p-adic_2024,
	title = {\$p\$-{Adic} {Brownian} {Motion} is a {Scaling} {Limit}},
	volume = {57},
	issn = {1751-8113, 1751-8121},
	url = {http://arxiv.org/abs/2010.05492},
	doi = {10.1088/1751-8121/ad40df},
	number = {20},
	journal = {Journal of Physics A: Mathematical and Theoretical},
	author = {Weisbart, David},
	month = may,
	year = {2024},
	note = {arXiv:2010.05492 [math-ph]},
	pages = {205203},
}

@article{urban_diffusion_2022,
	title = {On a diffusion on finite adeles and the {Feynman}-{Kac} integral},
	volume = {63},
	issn = {0022-2488},
	url = {https://doi.org/10.1063/5.0111423},
	doi = {10.1063/5.0111423},
	number = {12},
	urldate = {2025-11-24},
	journal = {Journal of Mathematical Physics},
	author = {Urban, Roman},
	month = dec,
	year = {2022},
	pages = {122101},
}

@article{weisbart_infinitesimal_2021,
	title = {On infinitesimal generators and {Feynman}–{Kac} integrals of adelic diffusion},
	volume = {62},
	issn = {0022-2488, 1089-7658},
	url = {https://pubs.aip.org/jmp/article/62/10/103504/318342/On-infinitesimal-generators-and-Feynman-Kac},
	doi = {10.1063/5.0056119},
	language = {en},
	number = {10},
	urldate = {2025-11-24},
	journal = {Journal of Mathematical Physics},
	author = {Weisbart, David},
	month = oct,
	year = {2021},
	pages = {103504},
}

@book{billingsley_convergence_1999,
	address = {New York Weinheim},
	edition = {2. ed},
	series = {Wiley series in probability and statistics {Probability} and statistics section},
	title = {Convergence of probability measures},
	isbn = {978-0-471-19745-4 978-0-470-31780-8},
	language = {en},
	publisher = {Wiley},
	author = {Billingsley, Patrick},
	year = {1999},
}

@article{torba_parabolic_2013,
	title = {Parabolic {Type} {Equations} and {Markov} {Stochastic} {Processes} on {Adeles}},
	volume = {19},
	copyright = {http://www.springer.com/tdm},
	issn = {1069-5869, 1531-5851},
	url = {http://link.springer.com/10.1007/s00041-013-9277-2},
	doi = {10.1007/s00041-013-9277-2},
	language = {en},
	number = {4},
	urldate = {2025-12-05},
	journal = {Journal of Fourier Analysis and Applications},
	author = {Torba, S. M. and Zúñiga-Galindo, W. A.},
	month = aug,
	year = {2013},
	pages = {792--835},
}

@article{varadarajan_remarks_2002,
	title = {Some remarks on arithmetic physics},
	volume = {103},
	copyright = {https://www.elsevier.com/tdm/userlicense/1.0/},
	issn = {03783758},
	url = {https://linkinghub.elsevier.com/retrieve/pii/S037837580100194X},
	doi = {10.1016/S0378-3758(01)00194-X},
	language = {en},
	number = {1-2},
	urldate = {2025-12-17},
	journal = {Journal of Statistical Planning and Inference},
	author = {Varadarajan, V.S.},
	month = apr,
	year = {2002},
	pages = {3--13},
}

@article{kern_skorokhod_2024,
	title = {Skorokhod topologies},
	volume = {71},
	issn = {1432-1815},
	url = {https://doi.org/10.1007/s00591-023-00353-2},
	doi = {10.1007/s00591-023-00353-2},
	language = {en},
	number = {1},
	urldate = {2025-12-17},
	journal = {Mathematische Semesterberichte},
	author = {Kern, Julian},
	month = mar,
	year = {2024},
	pages = {1--18},
}

@article{van_peski_local_2024,
	title = {Local limits in p\$p\$‐adic random matrix theory},
	volume = {129},
	issn = {0024-6115, 1460-244X},
	url = {https://londmathsoc.onlinelibrary.wiley.com/doi/10.1112/plms.12626},
	doi = {10.1112/plms.12626},
	language = {en},
	number = {3},
	urldate = {2026-02-25},
	journal = {Proceedings of the London Mathematical Society},
	author = {Van Peski, Roger},
	month = sep,
	year = {2024},
	pages = {e12626},
}

@misc{shen_eigenvalues_2026,
	title = {Eigenvalues of \$p\$-adic random matrices},
	url = {http://arxiv.org/abs/2601.06283},
	doi = {10.48550/arXiv.2601.06283},
	urldate = {2026-02-25},
	publisher = {arXiv},
	author = {Shen, Jiahe and Peski, Roger Van},
	month = jan,
	year = {2026},
	note = {arXiv:2601.06283 [math]},
}

@incollection{manin_reflections_1989,
	title = {{REFLECTIONS} {ON} {ARITHMETICAL} {PHYSICS}},
	copyright = {https://www.elsevier.com/tdm/userlicense/1.0/},
	isbn = {978-0-12-218100-9},
	url = {https://linkinghub.elsevier.com/retrieve/pii/B9780122181009500170},
	doi = {10.1016/B978-0-12-218100-9.50017-0},
	language = {en},
	urldate = {2026-02-26},
	booktitle = {Conformal {Invariance} and {String} {Theory}},
	publisher = {Elsevier},
	author = {Manin, Yu.I.},
	year = {1989},
	pages = {293--303},
}

@article{volovich_p-adic_1987,
	title = {p-adic space-time and string theory},
	volume = {71},
	copyright = {http://www.springer.com/tdm},
	issn = {0040-5779, 1573-9333},
	url = {http://link.springer.com/10.1007/BF01017088},
	doi = {10.1007/BF01017088},
	language = {en},
	number = {3},
	urldate = {2026-02-26},
	journal = {Theoretical and Mathematical Physics},
	author = {Volovich, I. V.},
	month = jun,
	year = {1987},
	pages = {574--576},
}

@book{whitt_stochastic-process_2002,
	address = {New York, NY},
	series = {Springer {Series} in {Operations} {Research} and {Financial} {Engineering}},
	title = {Stochastic-{Process} {Limits}: {An} {Introduction} to {Stochastic}-{Process} {Limits} and {Their} {Application} to {Queues}},
	copyright = {http://www.springer.com/tdm},
	isbn = {978-0-387-95358-8 978-0-387-21748-2},
	shorttitle = {Stochastic-{Process} {Limits}},
	url = {http://link.springer.com/10.1007/b97479},
	doi = {10.1007/b97479},
	language = {en},
	urldate = {2026-02-26},
	publisher = {Springer New York},
	author = {Whitt, Ward},
	editor = {Glynn, Peter W. and Robinson, Stephen M.},
	year = {2002},
}

@article{khrennikov_ultrametric_2020-1,
	title = {An {Ultrametric} {Random} {Walk} {Model} for {Disease} {Spread} {Taking} into {Account} {Social} {Clustering} of the {Population}},
	volume = {22},
	issn = {1099-4300},
	url = {https://www.mdpi.com/1099-4300/22/9/931},
	doi = {10.3390/e22090931},
	language = {en},
	number = {9},
	urldate = {2026-02-26},
	journal = {Entropy},
	author = {Khrennikov, Andrei and Oleschko, Klaudia},
	month = aug,
	year = {2020},
	pages = {931},
}

@article{vladimirov_generalized_1988,
	title = {Generalized functions over the field of \textit{p} -adic numbers},
	volume = {43},
	issn = {0036-0279, 1468-4829},
	url = {https://www.mathnet.ru/eng/rm1967},
	doi = {10.1070/RM1988v043n05ABEH001924},
	number = {5},
	urldate = {2026-02-26},
	journal = {Russian Mathematical Surveys},
	author = {Vladimirov, Vasilii S},
	month = oct,
	year = {1988},
	pages = {19--64},
}

@article{yasuda_markov_2010,
	title = {Markov {Processes} on the {Adeles} and {Representations} of {Euler} {Products}},
	volume = {23},
	issn = {1572-9230},
	url = {https://doi.org/10.1007/s10959-009-0222-x},
	doi = {10.1007/s10959-009-0222-x},
	language = {en},
	number = {3},
	urldate = {2026-04-23},
	journal = {Journal of Theoretical Probability},
	author = {Yasuda, Kumi},
	month = sep,
	year = {2010},
	pages = {748--769},
}

@article{yasuda_limit_2017,
	title = {Limit theorems for p-adic valued asymmetric semistable laws and processes},
	volume = {9},
	issn = {2070-0474},
	url = {https://doi.org/10.1134/S207004661701006X},
	doi = {10.1134/S207004661701006X},
	language = {en},
	number = {1},
	urldate = {2026-04-24},
	journal = {p-Adic Numbers, Ultrametric Analysis and Applications},
	author = {Yasuda, Kumi},
	month = jan,
	year = {2017},
	pages = {62--77},
}

\end{document}